\documentclass{article}[12pt]

\usepackage{amsmath}
\usepackage{amssymb}
\usepackage{amsthm}
\usepackage{enumerate}
\usepackage{hyperref}

\newtheorem{theorem}{Theorem}
\newtheorem{prop}[theorem]{Proposition}
\newtheorem{lemma}[theorem]{Lemma}
\newtheorem{cor}[theorem]{Corollary}
\newtheorem{conj}[theorem]{Conjecture}
\newtheorem{definition}{Definition}[section]

\newcommand{\ZZ}{\mathbb{Z}}
\newcommand{\NN}{\mathbb{N}}
\newcommand{\WW}{\mathbb{W}}
\newcommand{\FF}{\mathbb{F}}

\allowdisplaybreaks
\date{}
\begin{document}
\title{On the Reciprocal of the Binary Generating Function for the Sum of Divisors}
\author{\Large{Joshua N. Cooper \thanks{This work is funded in part by NSF grant DMS-1001370.}} \\ Department of Mathematics \\ University of South Carolina \\ Columbia, SC 29208 \\ USA \\ \href{mailto:cooper@math.sc.edu}{cooper@math.sc.edu} \and \Large{Alexander W.~N.~Riasanovsky} \\ Department of Mathematics \\ University of Pennsylvania \\ Philadelphia, PA 19104 \\ USA \\ \href{mailto:alexneal@sas.upenn.edu}{alexneal@sas.upenn.edu}}
\maketitle
\begin{abstract}
If \( A \) is a set of natural numbers containing \( 0 \), then there is a unique nonempty ``reciprocal'' set \( B \) of natural numbers (containing \( 0 \)) such that every positive integer can be written in the form \( a + b \), where \( a \in A \) and \( b \in B \), in an even number of ways.  Furthermore, the generating functions for \( A \) and \( B \) over \( \FF_2 \) are reciprocals in \( \FF_2 [[q]] \).  We consider the reciprocal set \( B \) for the set \( A \) containing \( 0 \) and all integers such that \( \sigma(n) \) is odd, where \( \sigma(n) \) is the sum of all the positive divisors of \( n \). This problem is motivated by Euler's ``Pentagonal Number Theorem'', a corollary of which is that the set of natural numbers \( n \) so that the number \( p(n) \) of partitions of an integer \( n \) is odd is the reciprocal of the set of generalized pentagonal numbers (integers of the form \( k(3k\pm1)/2 \), where \( k \) is a natural number).  An old (1967) conjecture of Parkin and Shanks is that the density of integers \( n \) so that \( p(n) \) is odd (equivalently, even) is  \(1/2 \).  Euler also found that  \( \sigma(n) \) satisfies an almost identical recurrence as that given by the Pentagonal Number Theorem, so we hope to shed light on the Parkin-Shanks conjecture by computing the density of the reciprocal of the set containing the natural numbers with \( \sigma(n) \) odd (\( \sigma(0)=1 \) by convention).  We conjecture this particular density is \( 1/32 \) and prove that it lies between \( 0 \) and \( 1/16 \).  We finish with a few surprising connections between certain Beatty sequences and the sequence of integers \( n \) for which \( \sigma(n) \) is odd.
\end{abstract}

\section{Introduction} \label{intro}
For any sets containing nonnegative integers \( A \) and \( B \), the asymmetric additive representation function is defined by
\[
R(n) = \left| \left\{(a,b)\in A\times B \mid n = a+b\right\} \right|
\]
Alternatively, we can define \( R(n) \) with
\[
\left( \sum_{a \in A} q^a \right)\left( \sum_{b \in B} q^b \right) = \sum_{n = 0}^\infty R(n)q^n
\]
We are interested in the case where \( R(n) \equiv 0 \pmod{2} \) for \( n \geq 1 \) and \( R(0) = 1 \) which is illustrated in the power series ring \( \FF_2  [[ q ]] \).  Here, \( A \) and \( B \) are called reciprocals.  For a set \( A \), we write its reciprocal as \( \overline{A} \). Given \( A \), we focus on the relative density of \( \overline{A} \),
\[
\delta \left( \overline{A},n\right) = \frac{\left|\overline{A}\cap\left[0,n\right]\right|}{n+1}
\]
and natural density \( \delta \left(\overline{A}\right) = \lim_{n \rightarrow \infty} \delta \left(\overline{A},n\right) \).  Consider the following statement of Euler's Pentagonal Number Theorem:
\begin{equation} \label{eq:EPNTZ}
\left ( \sum_{n = -\infty}^{\infty} (-1)^n q^{\frac{n(3n-1)}{2}}\right ) \left ( \sum_{n = 0}^\infty p(n)q^n \right ) = 1,
\end{equation}
where \( p(n) \) is the partition function of \( n \), i.e., the number of ways to write \( n \) as an unordered sum of positive integers.  If we rewrite (\ref{eq:EPNTZ}) mod \( 2 \), the result is
\[
\left ( \sum_{n = -\infty}^{\infty} q^{\frac{n(3n-1)}{2}}\right ) \left ( \sum_{n= 0}^\infty p(n) q^n \right ) = 1,
\]
where the power series are now elements of the ring \( \FF_2 [[q]] \).  In this sense, \( P \), the set of integers with an odd number of partitions (including \( 0 \)), is the reciprocal of the set of generalized pentagonal numbers \( G \), i.e., \( \overline{G} = P \).  A well-known and difficult conjecture of Parkin and Shanks states that \( \delta(P) = 1/2 \).  The current best lower bounds on the density of \( P \) still tend towards \( 0 \) \cite{A, NRS}.

The paper which precedes this attempts to shed light on the question by studying reciprocals mod \( 2 \) in general.  In particular, the authors found that a loosely-defined ``typical'' reciprocal set has density \( 1/2 \) \cite{CEO}.  With this in mind, we continue the line of work by studying the analogous reciprocal for the function \( \sigma(n) \), the sum-of-divisors function defined as
\[
\sigma(n) =\sum_{d|n}d.
\]
The motivating connection between \( p(n) \) and \( \sigma(n) \) is the fact that they satisfy almost identical recurrences \cite{B}
\[
p(n) = \sum_{n = -\infty}^{\infty}(-1)^n p\left(n-\frac{k(3k-1)}{2}\right)
\]
and
\[
\sigma(n) = \sum_{n = -\infty}^{\infty}(-1)^n\sigma\left(n-\frac{k(3k-1)}{2}\right)
\]
with the only difference being that \( \sigma (n-n) \) is interpreted to mean \( n \) (since \( \sigma(0) \) is undefined, whereas \( p(0)=1 \)). The reciprocal sets \( G \) and \( P = \overline{G} \) have densities \( 0 \) and, assuming the Parkin-Shanks Conjecture, \( 1/2 \), respectively.  Then letting \( \Sigma \) denote the set containing \( 0 \) and all positive integers \( n \) such that \( \sigma (n) \equiv 1 \pmod{2} \), we ask: what are \( \delta(\Sigma) \) and \( \delta(\overline{\Sigma}) \)?

Throughout this sequel, we use the following notation.

\begin{definition} For any set containing nonnegative integers \( F \), we write \( F(q) \) for the ordinary generating function over \( \FF_2 \) of (the indicator function of) \( F \). In other words,
\[
F(q) = \sum_{f \in F} q^f.
\]
Furthermore, for a set of nonnegative integers \( F \), we write \( F^k \) for the set of indices of nonvanishing monomials in \( F(q)^k \).
\end{definition}

\begin{definition} For any set \( F \subseteq \NN \) with \( 0 \in F \), let \( \overline{F} \) be the unique set obtained from \( F(q) \) by defining \( F(q) \overline{F}(q) = 1 \).
\end{definition}

\begin{definition} For any set \( F \subseteq \NN \), let the set of even elements of \( F \) be denoted \( F_e \) and the odd elements \( F_o \), so that \( F(q) = F_e(q) + F_o(q) \).
\end{definition} \text{}
\\
We repeatedly employ the following result, sometimes known as the ``Children's Binomial Theorem''.

\begin{theorem} For any \( f \), \( g \in \FF_2[[q]] \), \( (f+g)^2 \equiv f^2+g^2 \).
\end{theorem}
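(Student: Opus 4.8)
\emph{Proof proposal.} The plan is to derive this purely from the ring structure of \( \FF_2[[q]] \), so that the argument is essentially one line once the setup is in place. First I would recall that \( \FF_2[[q]] \) is a commutative ring, so for any two series \( f, g \) the distributive and commutative laws give \( (f+g)^2 = f^2 + fg + gf + g^2 = f^2 + 2fg + g^2 \). The only substantive point is then that the coefficient field is \( \FF_2 \), in which \( 1 + 1 = 0 \); hence \( 2fg \) is the zero series and the middle term drops out, leaving \( (f+g)^2 = f^2 + g^2 \) as claimed. (If the statement is instead read as a congruence between integer power series, the same computation shows that \( (f+g)^2 - f^2 - g^2 = 2fg \) has all coefficients divisible by \( 2 \).)

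If one wishes to avoid quoting general ring identities, the alternative is a coefficient-by-coefficient check: write \( f = \sum_{i \ge 0} a_i q^i \) and \( g = \sum_{i \ge 0} b_i q^i \) with \( a_i, b_i \in \FF_2 \), and compare the coefficient of \( q^n \) on both sides. On the left it equals \( \sum_{i+j=n} (a_i + b_i)(a_j + b_j) \); for \( i \ne j \) the contributions of the ordered pairs \( (i,j) \) and \( (j,i) \) are equal and so cancel over \( \FF_2 \), while the diagonal term (present only when \( n \) is even) contributes \( a_{n/2}^2 + b_{n/2}^2 \), which is exactly the coefficient of \( q^n \) in \( f^2 + g^2 \). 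Either route finishes the proof.

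I do not anticipate any genuine obstacle here; the one thing to be careful about is that these manipulations are legitimate for \emph{formal} power series, but this is immediate since every coefficient of a product is a finite sum, so no convergence question arises. The identity will then be used repeatedly in the sequel to split generating functions such as \( F(q) = F_e(q) + F_o(q) \) and to square them term by term.
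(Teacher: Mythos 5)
Your proof is correct: the expansion \( (f+g)^2 = f^2 + 2fg + g^2 \) together with \( 2 = 0 \) in \( \FF_2 \) settles the claim, and your coefficient-by-coefficient check is also sound. The paper states this ``Children's Binomial Theorem'' without proof, so there is no argument to compare against; your one-line characteristic-2 computation is the standard justification and is all that is needed.
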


\section{The Sum of Divisors Function} \label{SODF}

\begin{definition} Let \( \Sigma(q) \) be the binary generating function for \( \sigma(n) \) for nonnegative integers \( n \).  By definition, \( \Sigma \) is the set containing nonnegative integers \( n \) with \( \sigma(n) \) odd.
\[
\Sigma(q) = \sum_{n = 0}^\infty\sigma(n)q^n
\]
\end{definition}

In order to find the density of \( \Sigma \), we need a description of those integers that have an odd divisor sum.  Let \( n \) be a positive natural number.  Then \( \sigma(n) \) is defined to be the sum of all (positive) divisors of \( n \), including \( n \) itself, i.e.,
\[
\sigma(n) = \sum_{d | n} d.
\]
We can write the prime factorization of \( n \) as
\[
n = \prod_{i=1}^k p_i^{e_i}
\]
where the \( p_i \) are distinct primes and \( e_i \) is a positive integer.  Clearly, \( d \in \NN \) divides \( n \) if and only if it can be written in the form
\[
d = \prod_{i=1}^k p_i^{f_i}
\]
where \( 0 \leq f_i \leq e_i \) for each \( i \).  Therefore,
\begin{align}
\sigma(n) &= \sum_{d | n} d \nonumber \\
&= \sum_{f_1=0}^{e_1} \cdots \sum_{f_k=0}^{e_k} \prod_{i=1}^k p_i^{f_i}  \nonumber \\
&= \prod_{i=1}^k \sum_{f_i = 0}^{e_i} p_i^{f_i}. \label{eq1}
\end{align}
This product is odd precisely when all of its factors \( \sum_{f_i} p_i^{f_i} \)are odd.

\begin{lemma} \label{lem1} For an odd prime \( p \), the quantity \( \sum_{f = 0}^{e} p^{f} \) is odd precisely when \( e \) is even.
\end{lemma}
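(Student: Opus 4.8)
The plan is to reduce the statement to a trivial parity count by passing to $\FF_2$ (equivalently, reducing the sum modulo $2$). Since $p$ is an odd prime, every power $p^f$ with $f \geq 0$ is odd, i.e.\ $p^f \equiv 1 \pmod 2$. Hence
\[
\sum_{f=0}^{e} p^f \;\equiv\; \sum_{f=0}^{e} 1 \;=\; e+1 \pmod 2,
\]
so the sum is odd exactly when $e+1$ is odd, i.e.\ exactly when $e$ is even. That is the whole argument.

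One could instead start from the closed form $\sum_{f=0}^{e} p^f = (p^{e+1}-1)/(p-1)$ and compare $2$-adic valuations of numerator and denominator (say, via lifting-the-exponent), but that is far heavier machinery than the problem warrants; the term-by-term reduction is shorter and makes transparent where the hypothesis is used. The only place the oddness of $p$ enters is the assertion $p^f \equiv 1 \pmod 2$: for $p = 2$ one has $\sum_{f=0}^{e} 2^f = 2^{e+1}-1$, which is always odd, so Lemma~\ref{lem1} is correctly restricted to odd primes, with the prime $2$ handled separately further on.

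Since there is no genuine obstacle here, the only thing to be careful about is bookkeeping: the sum has $e+1$ terms (the indices run $f = 0, 1, \dots, e$), not $e$, so the parity flip occurs at the right place. With Lemma~\ref{lem1} in hand, it combines directly with \eqref{eq1} to characterize which $n$ have $\sigma(n)$ odd.
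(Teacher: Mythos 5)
Your proof is correct, and it takes a different (and arguably cleaner) route than the paper. You reduce termwise modulo $2$: since $p$ is odd, every $p^f \equiv 1 \pmod 2$, so the sum is congruent to the number of terms, $e+1$, and the parity statement falls out immediately. The paper instead argues by an algebraic factorization over $\ZZ$, splitting into cases: for $e$ even it writes $\sum_{f=0}^e p^f = (1+p)(1+p^2+\cdots+p^{e-2}) + p^e$ and notes the first summand is even (because $1+p$ is) while $p^e$ is odd; for $e$ odd it writes the sum as $(1+p)(1+p^2+\cdots+p^{e-1})$, which is even. Both arguments are elementary and use the oddness of $p$ in the same essential way ($p \equiv 1$, equivalently $1+p$ even); your version avoids the case split and the explicit factorizations, while the paper's version keeps the computation in $\ZZ$ and mirrors the geometric-series manipulations used elsewhere (as in its Lemma~\ref{lem2}, where $\sum_{k=0}^r 2^k = 2^{r+1}-1$ handles the prime $2$ separately, exactly as you note). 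Your bookkeeping of the $e+1$ terms is the one point where such a proof could slip, and you handle it correctly.
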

\begin{proof} If \( e \) is even, we may write
\[
\sum_{f = 0}^e p^f = (1 + p)(1 + p^2 + p^4 + \cdots + p^{e-2}) + p^e.
\]
Since \( p \) is odd, \( 1+p \) is even, so the first summand is even.  The second summand, being a nonnegative power of an odd integer, is odd.  Therefore, the sum is odd.  If \( e \) is odd, we may write
\[
\sum_{f = 0}^e p^f = (1 + p)(1 + p^2 + p^4 + \cdots + p^{e-1}).
\]
Again, \( 1+p \) is even, so the sum is even.
\end{proof}

\begin{lemma} \label{lem2} The quantity \( \sum_{k=0}^r 2^r \) is odd for any natural number \( r \).
\end{lemma}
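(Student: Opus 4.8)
The plan is to reduce the statement to the elementary fact that the sum $1 + 2 + 4 + \cdots + 2^r$ of consecutive powers of two (the quantity $\sum_{k=0}^{r} 2^k$ intended here) equals $2^{r+1} - 1$. First I would invoke the standard finite geometric series identity $\sum_{k=0}^{r} 2^k = 2^{r+1} - 1$, valid for every natural number $r$ and with the edge case $r = 0$ causing no trouble since both sides equal $1$. Since $2^{r+1}$ is even, $2^{r+1} - 1$ is odd, and this finishes the proof in a single line.

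If a proof more in the spirit of Lemma \ref{lem1} is preferred, I would instead argue term by term: among the summands $2^0, 2^1, \dots, 2^r$, exactly one, namely $2^0 = 1$, is odd, while every $2^k$ with $k \geq 1$ is even; hence the total is a sum of one odd integer and finitely many even integers, and is therefore odd. A one-line induction on $r$ also works: the base case $r = 0$ gives the value $1$, and the inductive step uses $\sum_{k=0}^{r+1} 2^k = 2^{r+1} + \sum_{k=0}^{r} 2^k$, which is even plus odd, hence odd.

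There is no real obstacle here; the only point requiring a moment's care is that ``natural number'' includes $0$ in this paper, so I would make sure the argument covers $r = 0$ explicitly (it does). The purpose of this lemma is to complement Lemma \ref{lem1} by disposing of the prime $p = 2$ in the factorization \eqref{eq1}: combining the two lemmas shows that the product in \eqref{eq1} is odd exactly when every odd prime divides $n$ to an even power (the exponent of $2$ being unrestricted), i.e., exactly when $n$ is a perfect square or twice a perfect square.
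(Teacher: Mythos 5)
Your main argument is exactly the paper's proof: both reduce the (clearly intended) sum $\sum_{k=0}^{r} 2^k$ to the geometric-series identity $2^{r+1}-1$ and observe that this is odd. The alternative term-by-term and inductive arguments you sketch are fine but add nothing beyond the paper's one-line approach.
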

\begin{proof} This follows immediately from the observation that
\[
\sum_{k=0}^r 2^r = 2^{r+1} - 1.
\]
\end{proof}

\begin{theorem} \label{thm1} The quantity \( \sigma(n) \) is odd if and only if the odd part of \( n \) is a square, i.e., if \( n = 2^r \cdot p_1^{e_1} \cdots p_k^{e_k} \) is the prime factorization of \( n \), then \( e_i \) is even for each \( i \), \( 1 \leq i \leq k \).
\end{theorem}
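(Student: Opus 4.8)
The plan is to read off the result directly from the factorization \eqref{eq1} together with Lemmas~\ref{lem1} and~\ref{lem2}. First I would isolate the prime $2$: writing the factorization of a positive integer $n$ as $n = 2^r \cdot p_1^{e_1} \cdots p_k^{e_k}$ with the $p_i$ distinct odd primes and $r \geq 0$, formula \eqref{eq1} gives
\[
\sigma(n) = \left( \sum_{f=0}^{r} 2^f \right) \prod_{i=1}^k \left( \sum_{f_i=0}^{e_i} p_i^{f_i} \right).
\]
By Lemma~\ref{lem2} the first factor equals $2^{r+1}-1$, which is odd regardless of $r$, so it never affects the parity of $\sigma(n)$.

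Next I would invoke the elementary fact that a (finite) product of positive integers is odd if and only if every factor is odd. Combining this with the previous display, $\sigma(n)$ is odd if and only if each factor $\sum_{f_i=0}^{e_i} p_i^{f_i}$ is odd. By Lemma~\ref{lem1}, applied to the odd prime $p_i$, this holds exactly when each exponent $e_i$ is even. Finally I would translate the condition ``$e_i$ even for all $i$'' into the stated form: it says precisely that $p_1^{e_1}\cdots p_k^{e_k}$ is a perfect square, and $p_1^{e_1}\cdots p_k^{e_k}$ is exactly the odd part of $n$ (the $2$-adic part $2^r$ having been factored out). For completeness I would also note the degenerate case $n=0$, where $\sigma(0)=1$ is odd by the convention fixed in the introduction, so $0 \in \Sigma$ consistently with regarding its ``odd part'' $1$ as a square.

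There is no real obstacle here: the theorem is essentially a bookkeeping corollary of the two lemmas, and the only thing to be careful about is keeping the prime $2$ separate from the odd primes throughout, so that Lemma~\ref{lem1} is only applied where its hypothesis (an odd prime) is satisfied and Lemma~\ref{lem2} handles the remaining factor. The statement ``odd part of $n$ is a square'' is then just a restatement of ``all exponents of odd primes in $n$ are even.''
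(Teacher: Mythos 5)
Your proposal is correct and follows essentially the same route as the paper: factor out the power of $2$ via \eqref{eq1}, apply Lemma~\ref{lem2} to that factor and Lemma~\ref{lem1} to each odd-prime factor, and conclude that $\sigma(n)$ is odd exactly when every $e_i$ is even, i.e., the odd part of $n$ is a square. Your extra remark about the $n=0$ convention is harmless and does not change the argument.
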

\begin{proof}
By (\ref{eq1}), we may write
\[
\sigma(n) = \left ( \sum_{j=0}^r 2^j \right ) \cdot \left (\prod_{i=1}^k\sum_{j=0}^{e_i} p_i^j \right ).
\]
The left-hand factor is always odd, by Lemma \ref{lem2}.  The right-hand factor is odd if and only if {\em all} of the sums \( \sum_{j} p_i^j \) are odd.  By Lemma \ref{lem1}, this occurs if and only if each of the \( e_i \) is even.  Therefore, \( \sigma(n) \) is even if and only if its odd part is a square.
\end{proof}

\begin{definition} Let \( S(q) \) be the power series with positive squares as the exponents of \( q \):
\[
S(q) = \sum_{n = 1}^\infty q^{n^2}
\]
\end{definition}

\begin{lemma} \label{lem3} \( \Sigma(q) = 1 + S(q) + S(q)^2 \)
\end{lemma}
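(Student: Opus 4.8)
The plan is to feed the set-theoretic description of $\Sigma$ coming from Theorem~\ref{thm1} into the generating function, rewrite the result as an infinite sum of Frobenius twists of the odd-square series, and then collapse that sum by a telescoping identity in characteristic $2$.

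First I would invoke Theorem~\ref{thm1}: a positive integer $n$ lies in $\Sigma$ exactly when $n = 2^r m^2$ for a unique $r \geq 0$ and a unique odd $m \geq 1$. Together with the convention $0 \in \Sigma$, this gives
\[
\Sigma(q) = 1 + \sum_{r \geq 0}\ \sum_{\substack{m \geq 1 \\ m \text{ odd}}} q^{2^r m^2}.
\]
Writing $g(q) = \sum_{m \geq 1,\ m\text{ odd}} q^{m^2}$, the inner sum equals $g(q^{2^r})$, and since squaring is the Frobenius on $\FF_2[[q]]$, iterating the Children's Binomial Theorem yields $g(q^{2^r}) = g(q)^{2^r}$. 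The outer sum converges in $\FF_2[[q]]$ because $g$ has zero constant term, so the valuation of $g(q)^{2^r}$ tends to infinity; hence $\Sigma(q) = 1 + \sum_{r \geq 0} g(q)^{2^r}$. Next I would express $g$ in terms of $S$: splitting $S(q) = \sum_{n\geq 1} q^{n^2}$ by the parity of $n$, the even-index part is $\sum_{j\geq 1} q^{(2j)^2} = S(q^4) = S(q)^4$, again by the Children's Binomial Theorem, so over $\FF_2$ we have $g(q) = S(q) + S(q)^4$.

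The last step is to observe that $h(q) := S(q) + S(q)^2$ satisfies $h + h^2 = S + S^4 = g$, a one-line computation in characteristic $2$. Consequently $g^{2^r} = h^{2^r} + h^{2^{r+1}}$ for every $r$, so the $N$-th partial sum of $\sum_{r\geq 0} g(q)^{2^r}$ telescopes to $h(q) + h(q)^{2^{N+1}}$; since $h$ has positive valuation, the second term vanishes as $N\to\infty$. Therefore $\sum_{r \geq 0} g(q)^{2^r} = h(q) = S(q) + S(q)^2$, which gives $\Sigma(q) = 1 + S(q) + S(q)^2$.

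I do not expect a real obstacle here; the only points needing care are the convergence of the infinite sums in $\FF_2[[q]]$ (so that term-by-term squaring and the telescoping limit are legitimate) and correctly spotting the ``antiderivative'' $h = S + S^2$ of $g$ under the operator $X \mapsto X + X^2$. As an alternative to the telescoping, one can note that $U := \sum_{r\geq 0} g(q)^{2^r}$ satisfies $U^2 + U = g$ and has zero constant term, and that such a solution is unique in $\FF_2[[q]]$: if $U_1^2 + U_1 = U_2^2 + U_2$ then $(U_1+U_2)^2 = U_1 + U_2$, forcing $U_1 + U_2 \in \{0,1\}$, hence $U_1 = U_2$ once both have zero constant term. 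Then one simply checks that $h = S + S^2$ solves the equation.
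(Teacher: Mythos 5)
Your proof is correct, but it takes a different route from the paper's. The paper proves Lemma \ref{lem3} by pure exponent bookkeeping: the exponents of \( S(q) \) are the squares, by the Children's Binomial Theorem the exponents of \( S(q)^2 \) are exactly the twice-squares, and by Theorem \ref{thm1} the set \( \Sigma \) is precisely \( \{0\} \) together with the squares and twice-squares (since \( n=2^r m^2 \) with \( m \) odd is a square when \( r \) is even and twice a square when \( r \) is odd) --- a two-line identification which implicitly also uses that no positive square equals twice a square, so no mod-\(2\) cancellation occurs in \( S+S^2 \). You instead parameterize \( \Sigma \) by the unique representation \( n = 2^r m^2 \), which gives \( \Sigma(q) = 1 + \sum_{r \geq 0} g(q)^{2^r} \) with \( g = D \); this is exactly the paper's Corollary \ref{cor1}, which the paper only obtains \emph{after} Lemma \ref{lem3} via Lemma \ref{lem4}. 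You then collapse the sum by the Artin--Schreier-type identity \( h + h^2 = g \) for \( h = S + S^2 \) (your relation \( D = S + S^4 \) is precisely the inverted, finite form of Lemma \ref{lem4}, \( S = \sum_n D^{4^n} \)), with a clean telescoping/uniqueness argument in \( \FF_2[[q]] \). What your approach buys: it is entirely formal in the power-series ring, so disjointness/cancellation questions never arise, and it yields Corollary \ref{cor1} independently as a byproduct, effectively reversing the paper's logical order. What it costs: it is longer and needs the convergence and telescoping care you correctly flag, where the paper's direct set identification is immediate.
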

\begin{proof} The power series \( S(q) \) has a nonzero coefficient for \( q^n \) if and only if \( n \) is a positive integer of the form \( k^2 \).  By the Children's Binomial Theorem, \( S(q)^2 \) has exactly the positive integers of the form \( 2k^2 \) as exponents of \( q \).  By adding \( 1 \), \( S(q) \), and \( S(q)^2 \) together, we obtain a power series whose nonzero monomials are all the nonnegative integer powers of \( q \) of the form \( k^2 \) or \( 2k^2 \).  By Theorem \ref{thm1}, \( \Sigma(q) \) has a nonzero coefficient of \( q^n \) if and only if the odd part of \( n \) is a square, i.e., \( n \) is any nonnegative integer of the form \( k^2 \) or \( 2k^2 \).  The claimed equality follows immediately.
\end{proof}

\begin{cor} \( \delta(\Sigma) = 0 \).
\end{cor}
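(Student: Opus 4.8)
The plan is to read off the structure of $\Sigma$ from Lemma \ref{lem3} (equivalently, from Theorem \ref{thm1}) and then observe that $\Sigma$ is simply too sparse to have positive density. By Lemma \ref{lem3}, the nonzero monomials of $\Sigma(q) = 1 + S(q) + S(q)^2$ are exactly $q^0$ together with $q^{k^2}$ and $q^{2k^2}$ for $k \geq 1$; that is, $\Sigma = \{0\} \cup \{k^2 : k \geq 1\} \cup \{2k^2 : k \geq 1\}$.

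First I would bound the counting function. For any $n \geq 1$, the number of positive squares in $[0,n]$ is $\lfloor \sqrt{n} \rfloor$, and the number of integers of the form $2k^2$ in $[0,n]$ is $\lfloor \sqrt{n/2} \rfloor$. Hence
\[
\left| \Sigma \cap [0,n] \right| \leq 1 + \lfloor \sqrt{n} \rfloor + \lfloor \sqrt{n/2} \rfloor \leq 1 + \left(1 + \tfrac{1}{\sqrt 2}\right)\sqrt{n}.
\]
Next I would divide by $n+1$ to get
\[
\delta(\Sigma, n) = \frac{\left| \Sigma \cap [0,n] \right|}{n+1} \leq \frac{1 + \left(1 + \tfrac{1}{\sqrt 2}\right)\sqrt{n}}{n+1},
\]
and the right-hand side tends to $0$ as $n \to \infty$. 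Taking the limit, $\delta(\Sigma) = \lim_{n \to \infty} \delta(\Sigma, n) = 0$, which is the claim.

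There is no real obstacle here: the entire content is the combinatorial description of $\Sigma$ provided by Theorem \ref{thm1} and Lemma \ref{lem3}, after which the density statement is an immediate consequence of the fact that the union of two "square-like" sequences has $O(\sqrt n)$ elements up to $n$. The only point worth stating carefully is that a finite union of sets each of density $0$ still has density $0$, which is transparent from the explicit bound above; one need not invoke any general principle.
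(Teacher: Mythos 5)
Your proposal is correct and follows essentially the same route as the paper: describe $\Sigma$ as $\{0\}$ together with the squares and twice-squares, bound $|\Sigma \cap [0,n]|$ by $1 + \lfloor \sqrt{n} \rfloor + \lfloor \sqrt{n/2} \rfloor$, divide by $n+1$, and let $n \to \infty$. The only cosmetic difference is that the paper states this count as an exact formula while you use an upper bound, which changes nothing.
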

\begin{proof} The relative density of \( \Sigma \), \( \delta(\Sigma,n) \), is just the number of nonnegative integers less than or equal to \( n \) that are either a square or twice a square, divided by \( n+1 \), so
\[
\delta(\Sigma,n) = \frac{1+\left\lfloor \sqrt{n}\right\rfloor+\left\lfloor \sqrt{n/2} \right\rfloor}{n+1}
\]
Taking the limit as \( n \) tends to infinity yields
\[
\delta(\Sigma) = 0.
\]
\end{proof}

\section{The Reciprocal} \label{Rec}

\begin{definition} Let \( D \) denote the odd (positive) squares, i.e.,
\[
D(q) = \sum_{n  = 0}^\infty q^{(2n+1)^2}
\]
\end{definition}
\begin{lemma} \label{lem4} \( S(q) = \sum_{n = 0}^\infty D(q)^{4^n} \).
\end{lemma}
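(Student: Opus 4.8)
The plan is to derive a one-step functional equation for $S(q)$ and then iterate it. First I would split the positive squares into odd and even ones: the odd positive squares are exactly the exponents appearing in $D(q)$, while every even positive square has the form $(2k)^2 = 4k^2$ with $k \geq 1$. By the Children's Binomial Theorem, $S(q)^2 = \sum_{n=1}^\infty q^{2n^2}$ and hence $S(q)^4 = \sum_{n=1}^\infty q^{4n^2}$, so $S(q)^4$ is precisely the generating function (over $\FF_2$) of the even positive squares. Since the odd and even positive squares partition the positive squares, this yields the identity $S(q) = D(q) + S(q)^4$ in $\FF_2[[q]]$.

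Next I would iterate this identity. Substituting it into itself and expanding the fourth power with the Children's Binomial Theorem gives $S(q) = D(q) + \bigl(D(q) + S(q)^4\bigr)^4 = D(q) + D(q)^4 + S(q)^{16}$, and an easy induction on $N$ then yields
\[
S(q) = \sum_{n=0}^{N} D(q)^{4^n} + S(q)^{4^{N+1}}
\]
for every $N \geq 0$.

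Finally I would pass to the limit $N \to \infty$. The lowest-degree monomial of $S(q)^{4^{N+1}}$ is $q^{4^{N+1}}$, so the error term $S(q)^{4^{N+1}}$ tends to $0$ in the $q$-adic topology on $\FF_2[[q]]$; likewise, for any fixed exponent $m$ only finitely many of the series $D(q)^{4^n}$ can contribute to the coefficient of $q^m$, since $D(q)^{4^n}$ has minimal degree $4^n$, so $\sum_{n=0}^\infty D(q)^{4^n}$ is a well-defined element of $\FF_2[[q]]$. Letting $N \to \infty$ in the displayed identity gives $S(q) = \sum_{n=0}^\infty D(q)^{4^n}$.

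I do not anticipate a genuine obstacle; the only point requiring care is the passage to the limit, i.e., confirming that the infinite series on the right converges in $\FF_2[[q]]$ and that the tail $S(q)^{4^{N+1}}$ vanishes, both of which follow from the elementary fact that raising a power series with zero constant term to high powers pushes its support off to infinity. An alternative, more computational route would be to describe the support of $D(q)^{4^n}$ directly as $\{4^n(2j+1)^2 : j \ge 0\}$ and verify, using that the $2$-adic valuation of such an integer is exactly $2n$, that every positive square lies in exactly one of these supports; but the iterative argument above seems cleaner and avoids the case analysis.
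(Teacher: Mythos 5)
Your proof is correct, and it takes a mildly different route from the paper's. The paper argues directly on supports: quadrupling the odd squares once gives the squares divisible by $4$ but not $16$, quadrupling again gives those divisible by $16$ but not $64$, and so on, so the supports of the $D(q)^{4^n}$ (namely $\{4^n(2j+1)^2\}$, via the Children's Binomial Theorem) partition the positive squares --- this is exactly the ``alternative, more computational route'' you sketch at the end. You instead prove only the one-step identity $S(q) = D(q) + S(q)^4$ (using just that an even square is divisible by $4$), then iterate and pass to the limit in the $q$-adic topology. Both are sound; the paper's version is a one-line support argument that implicitly uses unique factorization of a square as $4^n$ times an odd square, while yours trades that for a small induction plus an explicit convergence step, which has the virtue of making precise the tail estimate and the well-definedness of the infinite sum that the paper leaves tacit.
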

\begin{proof} Consider the set of all odd squares, \( D \). If we quadruple the members of \( D \), we obtain those even squares divisible by \( 4 \), but not divisible by \( 16 \).  If we quadruple again, we obtain the even squares divisible by \( 16 \), but not \( 64 \).  Applying the Children's Binomial Theorem,
\[
S(q) = \sum_{n = 0}^\infty D(q)^{4^n}.
\]
\end{proof}

\begin{cor} \label{cor1} \( \Sigma(q) = 1 + \sum_{n = 0}^\infty D(q)^{2^n} \)
\end{cor}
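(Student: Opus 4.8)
The plan is to combine Lemma~\ref{lem3} and Lemma~\ref{lem4} directly, using the Children's Binomial Theorem to deal with the square term. By Lemma~\ref{lem3}, $\Sigma(q) = 1 + S(q) + S(q)^2$, so it suffices to prove that $S(q) + S(q)^2 = \sum_{n=0}^\infty D(q)^{2^n}$ in $\FF_2[[q]]$, and then add $1$ to both sides.

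First I would substitute the expression from Lemma~\ref{lem4}, namely $S(q) = \sum_{n=0}^\infty D(q)^{4^n}$. Squaring this and applying the Children's Binomial Theorem term by term yields $S(q)^2 = \sum_{n=0}^\infty D(q)^{2\cdot 4^n}$. This manipulation is legitimate in the $q$-adic topology on $\FF_2[[q]]$: since $D(q)$ has no term of degree below $1$, the series $D(q)^k$ has no term of degree below $k$, so both of the sums in question converge and may be regrouped and squared freely.

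Next I would match up the two exponent sequences. The exponents occurring in $S(q)$ are the numbers $4^n = 2^{2n}$, i.e.\ the \emph{even} powers of $2$, while the exponents occurring in $S(q)^2$ are the numbers $2\cdot 4^n = 2^{2n+1}$, i.e.\ the \emph{odd} powers of $2$. Because every nonnegative integer is uniquely either even or odd, the disjoint union of these two index sets is exactly $\{2^m : m \ge 0\}$. Hence $S(q) + S(q)^2 = \sum_{m=0}^\infty D(q)^{2^m}$, and adding $1$ (and invoking Lemma~\ref{lem3}) gives the stated identity $\Sigma(q) = 1 + \sum_{n=0}^\infty D(q)^{2^n}$.

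I do not expect a genuine obstacle here; the argument is a short bookkeeping computation. The only point worth stating explicitly is the convergence/rearrangement justification in $\FF_2[[q]]$, which is immediate from the fact that the minimal degree of $D(q)^k$ grows with $k$. So the final write-up should occupy only a few lines.
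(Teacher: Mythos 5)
Your proof is correct and follows exactly the paper's route: substitute Lemma~\ref{lem4} into Lemma~\ref{lem3} and square via the Children's Binomial Theorem, so that the exponents $4^n$ and $2\cdot 4^n$ together run over all powers of $2$. The paper's own proof is just a one-line version of this same argument, so no further comment is needed.
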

\begin{proof} Substituting the equation in the statement of Lemma \ref{lem4} into the expression in Lemma \ref{lem3} and applying the Children's Binomial Theorem once again, we obtain the desired statement.
\end{proof}

The next results will allow us to decompose \( \overline{\Sigma} \) into congruence classes, so that we are able to analyze \( \delta\left(\overline{\Sigma}\right) \) ``piecewise''.

\begin{lemma} \label{lem5}
\[
\overline{\Sigma} (q) = \sum_{n=0}^\infty D (q)^{2^n-1}
\]
\end{lemma}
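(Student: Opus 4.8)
The plan is to check directly that $\Sigma(q)$ times the asserted series equals $1$ in $\FF_2[[q]]$ and then appeal to uniqueness of reciprocals. Set $T(q) := \sum_{n=0}^\infty D(q)^{2^n - 1}$. Since the lowest-order term of $D(q)$ is $q^1$, the term $D(q)^{2^n-1}$ has order at least $2^n - 1$, which tends to infinity, so $T(q)$ is a bona fide element of $\FF_2[[q]]$. First I would record that $T(q)$ and $\Sigma(q)$ differ by a single factor of $D(q)$: by Corollary \ref{cor1},
\[
\Sigma(q) = 1 + \sum_{n=0}^\infty D(q)^{2^n} = 1 + D(q)\sum_{n=0}^\infty D(q)^{2^n - 1} = 1 + D(q)\,T(q).
\]

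Next, I would square $T(q)$ using the Children's Binomial Theorem: termwise squaring gives $T(q)^2 = \sum_{n=0}^\infty D(q)^{2(2^n-1)} = \sum_{n=0}^\infty D(q)^{2^{n+1}-2}$, so $D(q)\,T(q)^2 = \sum_{n=0}^\infty D(q)^{2^{n+1}-1} = \sum_{m\ge 1} D(q)^{2^m-1}$. On the other hand $T(q) = D(q)^{2^0-1} + \sum_{m\ge 1} D(q)^{2^m-1} = 1 + \sum_{m\ge 1} D(q)^{2^m-1}$, so the two infinite tails agree and, working over $\FF_2$, cancel: $T(q) + D(q)\,T(q)^2 = 1$. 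Combining this with the previous display, $\Sigma(q)\,T(q) = (1 + D(q)\,T(q))\,T(q) = T(q) + D(q)\,T(q)^2 = 1$. Since $\overline{\Sigma}(q)$ is by definition the unique power series with $\Sigma(q)\,\overline{\Sigma}(q) = 1$, this forces $\overline{\Sigma}(q) = T(q)$, which is the claim.

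I do not anticipate a genuine obstacle here; the points that require care are confirming that $T(q)$ makes sense as a formal power series (orders of the summands tending to infinity) and the bookkeeping in the cancellation $T(q) + D(q)\,T(q)^2 = 1$, where it is essential that the characteristic is $2$ so that the matching tails annihilate rather than double. Should one prefer to avoid the quadratic identity, an alternative is to multiply out $\Sigma(q)\,T(q)$ directly from the series in Corollary \ref{cor1} and verify that every positive power of $q$ occurs an even number of times; but the route through $T + D T^2 = 1$ is shorter and reuses the Children's Binomial Theorem already in play.
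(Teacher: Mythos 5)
Your proof is correct and in essence the same as the paper's: both arguments rest on Corollary \ref{cor1} and the Children's Binomial Theorem applied to the lacunary series, exploiting the same quadratic relation. The only difference is direction — the paper squares $\Sigma(q)$ to obtain $D(q)=\Sigma(q)+\Sigma(q)^2$ and then divides by $D(q)\Sigma(q)$, whereas you square the candidate series $T(q)$ and verify $\Sigma(q)T(q)=1$ directly, which is the same identity read after dividing through by $\Sigma(q)^2$.
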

\begin{proof} We begin by squaring \( \Sigma(q) \) and rewriting:
\[
\Sigma(q)^2 = \left(1 +\sum_{n = 0}^\infty D(q)^{2^n}\right)^2 = 1 + \sum_{n = 1}^\infty D(q)^{2^n}.
\]
If we add \( D(q) \), we have
\[
\Sigma(q)^2 + D(q) = \Sigma(q),
\]
or
\[
D(q) = \Sigma(q) + \Sigma(q)^2,
\]
which we can divide by \( D(q) \Sigma(q) \) to obtain
\begin{equation} \label{eq3}
\overline{\Sigma}(q) = \frac{1+\Sigma(q)}{D(q)} = \sum_{n = 0}^\infty D(q)^{2^n-1}
\end{equation}
by Corollary \ref{cor1}.
\end{proof}

\begin{definition}
Let \( \overline{\Sigma}_k \) denote the subset of \( \overline{\Sigma} \) of integers congruent to \( k \pmod 8 \), i.e., \( \overline{\Sigma}_k = \overline{\Sigma} \cap (8 \ZZ + k) \).
\end{definition}
\begin{lemma} \label{subsets} Using the above definition, the following hold:

\begin{enumerate}[i.]
\item \( \overline{\Sigma}_0(q) = 1 \), i.e., \( \overline{\Sigma}_{0} = \left\{0\right\} \).
\item \( \overline{\Sigma}_1(q) = D(q) \), i.e., \( \overline{\Sigma}_{1} = \left\{\left(2k+1\right)^2 \mid k\in \NN, k \geq 0\right\} \).
\item \( \overline{\Sigma}_3(q) = D(q)^3 \).
\item \( \overline{\Sigma}_7(q) = D(q)^7 + D(q)^{15} + D(q)^{31} + \cdots  \).
\end{enumerate}
\end{lemma}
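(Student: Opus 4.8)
The plan is to reduce the whole statement to one elementary congruence: \emph{every odd square is $\equiv 1 \pmod 8$}. Indeed, $(2k+1)^2 = 4k(k+1)+1$, and $k(k+1)$ is always even, so $(2k+1)^2 \equiv 1 \pmod 8$. Hence every exponent appearing in $D(q)$ is $\equiv 1 \pmod 8$, and since a monomial of $D(q)^m$ is a product of $m$ monomials of $D(q)$, every exponent appearing in $D(q)^m$ is $\equiv m \pmod 8$. This property survives reduction of coefficients modulo $2$, because cancellation in $\FF_2[[q]]$ can only delete monomials, never shift their exponents. In particular, if $m$ and $m'$ lie in distinct residue classes mod $8$, the supports of $D(q)^m$ and $D(q)^{m'}$ are disjoint.

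Next I would apply Lemma \ref{lem5}, which gives $\overline{\Sigma}(q) = \sum_{n=0}^\infty D(q)^{2^n-1}$, and simply track the residue of the exponent $2^n-1$ modulo $8$. One has $2^0-1 = 0$, $2^1-1 = 1$, $2^2-1 = 3$, and $2^n - 1 \equiv 7 \pmod 8$ for every $n \geq 3$ (since $8 \mid 2^n$ in that range). By the previous paragraph, the $n=0$ summand is supported entirely in the class $0 \pmod 8$ and equals $D(q)^0 = 1$, which is item (i); the $n=1$ summand is supported entirely in the class $1 \pmod 8$ and equals $D(q)$, which is item (ii) (the description of the underlying set is just the definition of $D$); the $n=2$ summand is supported entirely in the class $3 \pmod 8$ and equals $D(q)^3$, which is item (iii); and the remaining summands, $n \geq 3$, are exactly those supported in the class $7 \pmod 8$, so their sum is $D(q)^7 + D(q)^{15} + D(q)^{31} + \cdots$, which is item (iv).

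Since this is essentially a bookkeeping argument, I do not expect a serious obstacle; the one point deserving care is the legitimacy of regrouping the series of Lemma \ref{lem5} by residue class mod $8$ inside $\FF_2[[q]]$. This is fine because the lowest-degree term of $D(q)^{2^n-1}$ is $q^{2^n-1}$, so the family is summable in the $q$-adic topology and the rearrangement does not disturb any coefficient; moreover each individual $D(q)^{2^n-1}$ lives in a single residue class, so collecting classes $0,1,3$ picks out a single term each with no interference. Within the class $7 \pmod 8$ the tail $\sum_{n\geq 3} D(q)^{2^n-1}$ may exhibit extensive internal cancellation, but the identity asserted for $\overline{\Sigma}_7(q)$ is a formal power-series identity and holds regardless of how much collapses.
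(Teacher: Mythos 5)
Your proposal is correct and follows essentially the same route as the paper: both arguments observe that every odd square is $\equiv 1 \pmod 8$, so the exponents of $D(q)^m$ lie in the class $m \pmod 8$, and then sort the summands of $\overline{\Sigma}(q) = \sum_{n \geq 0} D(q)^{2^n-1}$ from Lemma \ref{lem5} by the residue of $2^n - 1$ modulo $8$. Your extra remarks on summability and the impossibility of exponent-shifting under cancellation in $\FF_2[[q]]$ only make explicit what the paper leaves implicit.
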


\begin{proof} The proof proceeds as follows.  For any \( F(q) \in \FF_2[[q]] \), \( F(q)^k \) is the power series whose exponents are those integers that can be represented as a sum of \( k \) of \( F(q) \)'s monomial exponents in an odd number of ways.  Note that the exponents of \( q \) accompanying nonzero coefficients in \( D(q) \) are congruent to \( 1 \pmod 8 \).  Therefore, when \( D(q) \) is raised to a power congruent to \( k \pmod 8 \), the exponents of the resulting series are all congruent to \( k \pmod 8 \).  Proceeding from (\ref{eq3}), we may write
\[
\overline{\Sigma} = \overline{\Sigma}_0 \cup \overline{\Sigma}_1 \cup \overline{\Sigma}_3 \cup \overline{\Sigma}_7,
\]
because the powers of \( D(q) \) on the right-hand side of (\ref{eq3}) are congruent only to \( 0 \), \( 1 \), \( 3 \), and \( 7\) \( \pmod 8 \).  Indeed, by examining those exponents which appear in the terms of (\ref{eq3}), it is straightforward to see that the above lemma holds.
\end{proof}
Our next steps concern further classifications of \( \overline{\Sigma}_3 \) and \( \overline{\Sigma}_7 \). We put \( \overline{\Sigma}_3 \) aside until the end of this section. Presently, we provide the following definition and lemma.

\begin{definition} Let \( \Delta \) denote the set of triangular numbers, i.e.,
\[
\Delta(q) = \sum_{n = 0}^\infty q^{n(n+1)/2}.
\]
\end{definition}

\begin{lemma} \label{lem6} The following identities hold:
\begin{enumerate}[i.]
\item \label{eq5} \( \overline{\Sigma}(q) -1 -D(q) -D(q)^3 = D(q)^7\overline{\Sigma}(q)^8 \)
\item \label{eq6} \( q\Delta(q)^8 = D(q) \)

\end{enumerate}
\end{lemma}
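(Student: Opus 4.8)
The plan is to derive both identities directly from the series representation $\overline{\Sigma}(q) = \sum_{n \geq 0} D(q)^{2^n - 1}$ of Lemma \ref{lem5}, using the Children's Binomial Theorem in iterated form: squaring three times gives $\bigl(\sum_i f_i\bigr)^8 = \sum_i f_i^8$ for any (possibly infinite) family in $\FF_2[[q]]$, the infinite case being legitimate because convergence in $\FF_2[[q]]$ is coefficient-wise and, in each application below, the summands have valuations tending to infinity.

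For part (i), I would begin by noting that the exponents $2^n - 1$ for $n = 0, 1, 2$ are $0$, $1$, $3$, so that subtracting $1 + D(q) + D(q)^3$ from $\overline{\Sigma}(q)$ cancels precisely the first three terms of the series:
\[
\overline{\Sigma}(q) - 1 - D(q) - D(q)^3 = \sum_{n \geq 3} D(q)^{2^n - 1}.
\]
Next I would factor out $D(q)^7$, using that $2^n - 1 = 7 + 8\bigl(2^{n-3} - 1\bigr)$ for $n \geq 3$. Re-indexing with $m = n - 3$ then gives
\[
\sum_{n \geq 3} D(q)^{2^n - 1} = D(q)^7 \sum_{m \geq 0} D(q)^{8(2^m - 1)} = D(q)^7 \sum_{m \geq 0} \bigl(D(q)^{2^m - 1}\bigr)^8,
\]
and pulling the eighth power out of the sum via the Children's Binomial Theorem identifies the remaining factor as $\overline{\Sigma}(q)^8$, which is exactly (i).

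For part (ii), the key elementary fact is $(2n+1)^2 = 4n(n+1) + 1 = 8 \cdot \tfrac{n(n+1)}{2} + 1$, so that $q^{(2n+1)^2} = q \cdot \bigl(q^{n(n+1)/2}\bigr)^8$. Summing over $n \geq 0$ and moving the eighth power outside the sum yields $D(q) = q \bigl(\sum_{n \geq 0} q^{n(n+1)/2}\bigr)^8 = q\, \Delta(q)^8$.

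I do not expect a genuine obstacle. The only points that need a little care are the index arithmetic in (i) — checking that $2^n - 1$ really splits as $7 + 8(2^{n-3}-1)$ and that subtracting $1 + D(q) + D(q)^3$ disturbs no other terms — and the justification for applying the Children's Binomial Theorem to infinite sums, which rests on the coefficient-wise nature of convergence in $\FF_2[[q]]$ together with the fact that $D(q)^{2^m - 1}$ and $q^{n(n+1)/2}$ have valuations tending to infinity.
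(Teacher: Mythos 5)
Your proof is correct and follows essentially the same route as the paper: for (i), subtract the first three terms of $\overline{\Sigma}(q)=\sum_{n\ge 0}D(q)^{2^n-1}$, factor out $D(q)^7$, and reassemble the tail as $\overline{\Sigma}(q)^8$ via the Children's Binomial Theorem; for (ii), use $(2n+1)^2 = 8\cdot\tfrac{n(n+1)}{2}+1$. Your added remark justifying the term-by-term eighth power for infinite sums is a small extra precision the paper leaves implicit.
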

\begin{proof}  By Corollary \ref{cor1},
\[
\overline{\Sigma}(q)= \sum_{n = 0}^\infty D(q)^{2^n-1}.
\]
Therefore,
\begin{align*}
\overline{\Sigma}(q) -1 -D(q) -D(q)^3 &= \sum_{n = 0}^\infty D(q)^{2^{n+3}-1} \\
&= D(q)^7 \sum_{n = 0}^\infty D(q)^{2^{n+3}-8} \\
&= D(q)^7 \overline{\Sigma}(q)^8,
\end{align*}
which is claim (\ref{eq5}).  Now, observe that multiplying a triangular number \( n(n+1)/2 \), \( n \geq 0 \), by \( 8 \) and adding \( 1 \) yields an odd positive square, and in fact, every odd positive square can be uniquely obtained in this manner.  Therefore,
\[
q\Delta(q)^8 = q \left(\sum_{n = 0}^\infty q^{\frac{n(n+1)}{2}}\right)^8 = D(q),
\]
yielding claim (\ref{eq6}).
\end{proof}

The following result is \cite[Theorem 357]{HW} reduced modulo 2.

\begin{theorem}  \label{delta identity}
\[
\Delta(q) = \prod_{ n \geq 1} \left(1+q^n\right)^3
\]
\end{theorem}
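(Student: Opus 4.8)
The plan is to obtain Theorem~\ref{delta identity} by reducing modulo~$2$ the classical integer-coefficient form of \cite[Theorem 357]{HW}, namely Jacobi's identity
\[
\prod_{n\geq 1}(1-q^n)^3 = \sum_{k\geq 0}(-1)^k(2k+1)\,q^{k(k+1)/2}
\]
viewed as an equality in \(\ZZ[[q]]\). Reduction of coefficients mod~$2$ is a ring homomorphism \(\ZZ[[q]]\to\FF_2[[q]]\), and it is continuous in the \(q\)-adic topology because the coefficient of each power \(q^m\) on either side is a finite integer; hence the identity descends to \(\FF_2[[q]]\), and it remains only to identify the image of each side.

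For the left-hand side, \(-1\equiv 1\pmod 2\) gives \((1-q^n)^3\equiv(1+q^n)^3\) for every \(n\), so \(\prod_{n\geq 1}(1-q^n)^3\) reduces to \(\prod_{n\geq 1}(1+q^n)^3\). For the right-hand side, the coefficient of \(q^{k(k+1)/2}\) is \((-1)^k(2k+1)\), which is odd for every \(k\geq 0\) and therefore reduces to \(1\); since \(k\mapsto k(k+1)/2\) is injective on \(\NN\) and enumerates the triangular numbers \(0,1,3,6,\dots\), the right-hand side reduces to \(\sum_{k\geq 0}q^{k(k+1)/2}=\Delta(q)\). Equating the two images yields \(\Delta(q)=\prod_{n\geq 1}(1+q^n)^3\), as claimed.

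There is no genuine difficulty in this argument; the only points needing any care are that every \(2k+1\) is odd and that mod-$2$ reduction is compatible with the infinite product and infinite sum, both of which are immediate. If instead a proof carried out entirely inside \(\FF_2[[q]]\) is preferred, one can begin from Gauss's identity \(\sum_{k\geq 0}q^{k(k+1)/2}=\prod_{n\geq 1}\frac{1-q^{2n}}{1-q^{2n-1}}\): reducing mod~$2$ and invoking the Children's Binomial Theorem in the form \(1+q^{2n}=(1+q^n)^2\) rewrites the right-hand side as \(\bigl(\prod_{n\geq 1}(1+q^n)\bigr)^{2}\big/\prod_{n\geq 1}(1+q^{2n-1})\); then separating \(\prod_{n\geq 1}(1+q^n)\) into its even- and odd-index factors and using \(1+q^{2n}=(1+q^n)^2\) a second time gives \(\prod_{n\geq 1}(1+q^{2n-1})=\bigl(\prod_{n\geq 1}(1+q^n)\bigr)^{-1}\), and substituting this back produces the cube. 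Either route completes the proof.
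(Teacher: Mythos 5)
Your main argument is exactly what the paper does: it cites \cite[Theorem 357]{HW} (Jacobi's identity for $\prod_{n\geq 1}(1-q^n)^3$) and reduces it modulo $2$, noting that $-1\equiv 1$ and every coefficient $2k+1$ is odd, so your proof is correct and follows the same route. The alternative derivation you sketch from Gauss's identity inside $\FF_2[[q]]$ is also sound, but it is supplementary rather than a different approach to the stated result.
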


We believe the following theorem, though strictly speaking is not needed in its full generality for our main result, holds some independent interest.

\begin{theorem} Let \( k \in \ZZ \) and \( G(q) = \prod_{n \geq 1} (1 + q^n) \).  If \( k \) is odd, then
\[
\Sigma(q) G^k(q) = G^{k}_e(q).
\]
In particular, since \( G^3(q) = \Delta(q) \) and \( G^{-1}(q) = P(q) \), we have that \( \Delta(q)\Sigma(q) = \Delta_e(q) \) and \( P(q) \Sigma(q) = P_e(q) \). % and \( \overline{\Delta}(q) \Sigma(q) = \overline{\Delta}_e(q) \).
\end{theorem}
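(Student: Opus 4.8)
The plan is to reduce the general odd-$k$ statement to the single case $k = 1$, and then prove that case by a logarithmic-derivative computation. For the reduction, recall that over $\FF_2[[q]]$ the Frobenius gives $G(q)^2 = G(q^2)$, and $G(q)$ is a unit (constant term $1$); so for any odd $k \in \ZZ$ we may write $G^k(q) = G(q)\cdot\bigl(G(q)^2\bigr)^{(k-1)/2} = G(q)\,G(q^2)^{(k-1)/2}$, where the exponent $(k-1)/2$ is an integer (possibly negative, but this is harmless since $G(q^2)$ is a unit). Since $G(q^2)^{(k-1)/2}$ is a power series in $q^2$, multiplying by it carries even exponents to even exponents and odd to odd; hence for any $F \in \FF_2[[q]]$ one has $\bigl(F(q)\,G(q^2)^{(k-1)/2}\bigr)_e = F_e(q)\,G(q^2)^{(k-1)/2}$. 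Granting the $k = 1$ case $\Sigma(q)G(q) = G_e(q)$ proved below, this gives
\[
\Sigma(q)\,G^k(q) = \Sigma(q)G(q)\cdot G(q^2)^{(k-1)/2} = G_e(q)\,G(q^2)^{(k-1)/2} = G^k_e(q),
\]
which is the theorem; the two special cases stated are $k = 3$ (where $G^3(q) = \Delta(q)$ by Theorem~\ref{delta identity}) and $k = -1$ (where $G^{-1}(q) = P(q)$).

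It remains to prove $\Sigma(q)G(q) = G_e(q)$. The key elementary observation is that for any $F(q) = \sum_n a_n q^n \in \FF_2[[q]]$ the formal derivative satisfies $qF'(q) = \sum_n n a_n q^n = \sum_{n\ \mathrm{odd}} a_n q^n = F_o(q)$, and hence $F_e(q) = F(q) + qF'(q)$. Therefore $\Sigma(q)G(q) = G_e(q)$ is equivalent to $(\Sigma(q) + 1)G(q) = qG'(q)$, i.e. (dividing by the unit $G(q)$) to the identity
\[
\frac{qG'(q)}{G(q)} = \Sigma(q) + 1 = S(q) + S(q)^2,
\]
the last equality being Lemma~\ref{lem3}.

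Finally I would compute the left-hand side by taking the logarithmic derivative of $G(q) = \prod_{n\geq 1}(1 + q^n)$ term by term (legitimate $q$-adically since $1+q^n \to 1$): over $\FF_2$,
\[
\frac{qG'(q)}{G(q)} = \sum_{n\geq 1}\frac{n q^n}{1 + q^n} = \sum_{n\ \mathrm{odd}}\frac{q^n}{1+q^n} = \sum_{n\ \mathrm{odd}}\ \sum_{j\geq 1} q^{nj}.
\]
The coefficient of $q^m$ here is the parity of the number of odd divisors of $m$; writing $m = 2^a b$ with $b$ odd, the odd divisors of $m$ are exactly the divisors of $b$, so this parity is $1$ precisely when $b$ is a perfect square, i.e. precisely when $m$ is a square or twice a square. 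By Theorem~\ref{thm1} (equivalently Lemma~\ref{lem3}) that set of $m \geq 1$ is exactly $\Sigma \setminus \{0\}$, so the sum is $S(q) + S(q)^2$, as required. The only real content is this logarithmic-derivative identity; everything else is bookkeeping with the Frobenius and the even/odd splitting, and the one point that needs a line of care is justifying the $q$-adic convergence of the term-by-term logarithmic differentiation together with the fact that the even-part operator commutes with multiplication by a series in $q^2$.
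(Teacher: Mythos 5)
Your proof is correct and takes essentially the same route as the paper: both arguments hinge on the observation that over \( \FF_2 \) one has \( qF'(q)=F_o(q) \), combined with the term-by-term logarithmic derivative of \( \prod_{n\geq 1}(1+q^n) \) and the parity of the number of odd divisors. The only cosmetic differences are that you reduce to \( k=1 \) via the Frobenius identity \( G(q)^2=G(q^2) \) (the paper instead treats general odd \( k \) directly, the factor \( k \) vanishing mod \( 2 \)) and that you invoke Theorem \ref{thm1} and Lemma \ref{lem3} to identify the resulting series with \( S(q)+S(q)^2 \) rather than recomputing \( \sigma(n)\bmod 2 \) from scratch.
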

\begin{proof}
Over \( \FF_2 \), the derivative with respect to \( q \) of \( q^n \) is \( 0 \) if \( n \) is even and \( q^{n-1} \) if \( n \) is odd.  Taking the derivative of the expression \( G^k(q) = \prod_{n \geq 1} (1 + q^n)^{k} \) where \( k \) is odd, we see that
\[
\frac{d}{dq}(G^k(q)) = G^k(q) \sum_{ n \geq 1} \frac{nq^{n-1}}{1+q^n},
\]
which simplifies to
\[
\frac{G^k_o(q)}{q} = G^k(q) \sum_{ n \geq 0} \frac{q^{2n}}{1+q^{2n+1}}.
\]
This may be rewritten as
\[
\frac{G^k_o(q)}{G^k(q)} = \sum_{ n \geq 0} \frac{q^{2n+1}}{1+q^{2n+1}}.
\]
If we add \( G^k(q)/G^k(q) \) to the left and \( 1 \) to the right, we arrive at
\begin{equation} \label{delta fraction 2}
\frac{G^k_e(q)}{G^k(q)} = 1+\sum_{ n \geq 0} \frac{q^{2n+1}}{1+q^{2n+1}}.
\end{equation}

The right-hand side of (\ref{delta fraction 2}) has monomial terms \( 1 \) and \(q^n \) for all positive integers \( n \) which are divisible by exactly an odd number of odd numbers.  Note that, if \( r \) is the largest integer so that \( 2^r | n \), then
\begin{align*}
\sigma(n) &= (2^{r+1}-1) \sum_{2 \nmid d | n} d\\
&= \sum_{2 \nmid d | n} 1 \\
&= \begin{cases} 1 & \text{ if } n \text{ has an odd number of odd divisors;} \\ 0 & \text{ otherwise.} \end{cases}
\end{align*}
The desired conclusion is then the reciprocal of (\ref{delta fraction 2}).
\end{proof}

\begin{cor} \label{delta fraction 1} \(  \overline{\Sigma}(q) = \Delta(q)/\Delta_e(q) \).
\end{cor}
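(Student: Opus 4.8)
This is an immediate consequence of the preceding theorem specialized to $k = 3$. The plan is as follows. First, recall from Theorem \ref{delta identity} (Hardy--Wright's Theorem 357 reduced modulo $2$) that $G^3(q) = \prod_{n \geq 1}(1+q^n)^3 = \Delta(q)$, so that $\Delta(q)$ is exactly the series $G^k(q)$ appearing in the theorem with the odd exponent $k = 3$. Applying that theorem therefore yields
\[
\Sigma(q)\,\Delta(q) = \Delta_e(q).
\]

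Next I would observe that all the relevant series are invertible in $\FF_2[[q]]$: the constant term of $\Sigma(q)$ is $\sigma(0) = 1$, and the constant term of $\Delta_e(q)$ is $1$ as well, since the triangular number $0 = 0(0+1)/2$ is even. Hence $\Sigma(q)$ has a (unique) reciprocal $\overline{\Sigma}(q)$ in the sense of the second definition in Section \ref{intro}, and dividing the identity $\Sigma(q)\Delta(q) = \Delta_e(q)$ through by $\Sigma(q)\Delta_e(q)$ gives
\[
\overline{\Sigma}(q) = \frac{\Delta(q)}{\Delta_e(q)},
\]
which is the claim.

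There is essentially no obstacle here: the corollary is a one-line algebraic manipulation of the theorem, and the only thing worth checking carefully is the invertibility of $\Delta_e(q)$ so that the quotient $\Delta(q)/\Delta_e(q)$ is a well-defined element of $\FF_2[[q]]$. (One could alternatively bypass even this remark by noting that $\overline{\Sigma}(q)$ exists because $\Sigma(q)$ has constant term $1$, and then the displayed identity simply computes it.)
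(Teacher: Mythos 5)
Your proposal is correct and matches the paper's (implicit) argument exactly: the corollary is deduced from the theorem's special case $\Delta(q)\Sigma(q) = \Delta_e(q)$ by dividing through, and your check that $\Sigma(q)$ and $\Delta_e(q)$ have constant term $1$ in $\FF_2[[q]]$ is the only detail needed. Nothing is missing.
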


\begin{definition} \label{v} Let \( V(q) \) be the power series such that \( \overline{\Sigma}_7(q) = q^7V(q)^8 \).
\end{definition}

This definition is meaningful because \( \overline{\Sigma}_7(q) \) has only monomial terms of the form \( q^k \) where \( k \equiv 7 \pmod 8 \).

\begin{definition} Let \( T(q) = \frac{\Delta(q)^4}{\sqrt{\Delta_e(q)}} \).
\end{definition}
The square root and the fraction make sense because \( \Delta_e(q) \) has only even exponents and \( \sqrt{\Delta_e(q)} \) has a \( 1 \) term since \( \Delta_e(q) \) has a \( 1 \) term (\( 0 \) is an even triangular number).

\begin{theorem} \label{final equation} \( \overline{\Sigma}_7(q) = q^7T(q)^{16} \).
\end{theorem}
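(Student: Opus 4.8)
The plan is to assemble the result from identities already established, with essentially no new computation beyond bookkeeping of exponents. First I would use Lemma~\ref{subsets} to record that, since the four sets $\overline{\Sigma}_0$, $\overline{\Sigma}_1$, $\overline{\Sigma}_3$, $\overline{\Sigma}_7$ occupy distinct residue classes modulo $8$, their union is disjoint, so over $\FF_2$ we have $\overline{\Sigma}(q) = 1 + D(q) + D(q)^3 + \overline{\Sigma}_7(q)$, i.e.\ $\overline{\Sigma}_7(q) = \overline{\Sigma}(q) + 1 + D(q) + D(q)^3$. By Lemma~\ref{lem6}(\ref{eq5}), the right-hand side equals $D(q)^7 \overline{\Sigma}(q)^8$, so $\overline{\Sigma}_7(q) = D(q)^7 \overline{\Sigma}(q)^8$.

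Next I would substitute the two available closed forms. By Lemma~\ref{lem6}(\ref{eq6}), $D(q) = q\Delta(q)^8$, hence $D(q)^7 = q^7 \Delta(q)^{56}$. By Corollary~\ref{delta fraction 1}, $\overline{\Sigma}(q) = \Delta(q)/\Delta_e(q)$, and applying the Children's Binomial Theorem three times gives $\overline{\Sigma}(q)^8 = \Delta(q)^8/\Delta_e(q)^8$. Multiplying these, $\overline{\Sigma}_7(q) = q^7 \Delta(q)^{56} \cdot \Delta(q)^8/\Delta_e(q)^8 = q^7 \Delta(q)^{64}/\Delta_e(q)^8$. Finally I would recognize this factor as a sixteenth power: since $T(q) = \Delta(q)^4/\sqrt{\Delta_e(q)}$ is well defined (because $\Delta_e(q)$ has only even exponents and constant term $1$, hence is a square in $\FF_2[[q]]$), we have $T(q)^{16} = \Delta(q)^{64}/\Delta_e(q)^8$, and therefore $\overline{\Sigma}_7(q) = q^7 T(q)^{16}$ as claimed.

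There is no real obstacle here; the only points meriting a line of care are (a) checking that $\overline{\Sigma}$ decomposes as a genuinely disjoint union, so that subtracting $1 + D(q) + D(q)^3$ over $\FF_2$ isolates $\overline{\Sigma}_7(q)$ — this is immediate from the mod-$8$ congruences noted in Lemma~\ref{subsets} — and (b) verifying that the square root defining $T(q)$, and the passage between $(\,\cdot\,)^{16}$ and a fourth power times a square root, are legitimate in $\FF_2[[q]]$, which reduces to the Children's Binomial Theorem together with the fact that a power series with constant term $1$ has a unique square root in $\FF_2[[q]]$. Everything else is arithmetic of exponents.
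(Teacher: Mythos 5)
Your proposal is correct and follows essentially the same route as the paper: the decomposition $\overline{\Sigma}(q) = 1 + D(q) + D(q)^3 + \overline{\Sigma}_7(q)$, Lemma~\ref{lem6} parts (\ref{eq5}) and (\ref{eq6}), and Corollary~\ref{delta fraction 1}, culminating in $q^7\Delta(q)^{64}/\Delta_e(q)^8 = q^7T(q)^{16}$. The only cosmetic difference is that the paper passes through the auxiliary series $V(q)$ of Definition~\ref{v} (extracting an eighth root to get $V(q)=\Delta(q)^7\overline{\Sigma}(q)=T(q)^2$), whereas you compute $\overline{\Sigma}_7(q)=D(q)^7\overline{\Sigma}(q)^8$ directly; the substance is identical.
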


\begin{proof} We begin by combining Lemma \ref{lem5} and Lemma \ref{subsets} (iv):

\begin{equation}\label{eq4}
\overline{\Sigma}(q) = 1 + D(q) + D(q)^3 + \overline{\Sigma}_7(q).
\end{equation}
Applying part (\ref{eq5}) of Lemma \ref{lem6} to Definition \ref{v},
\[
D(q)^7\overline{\Sigma}(q)^8 = q^7V(q)^{8},
\]
which, using part (\ref{eq6}) of Lemma \ref{lem6}, becomes
\[
\left(q\Delta(q)^8\right)^7\overline{\Sigma}(q)^8 = q^7V(q)^{8}.
\]
Finally, a bit of simplification reveals that
\begin{equation} \label{eqk}
V(q) = \Delta(q)^7\overline{\Sigma}(q).
\end{equation}
If we substitute Lemma \ref{delta fraction 1} into (\ref{eqk}),
\[
V(q) = \frac{\Delta(q)^8}{\Delta_e(q)},
\]
which is exactly \( T(q)^2 \) on the right-hand side.  Substituting into Definition \ref{v}, we have our proof.
\end{proof}

Finally, we handle the case of exponents which are \(3 \pmod{8}\).  It is straightforward to see that these terms make a vanishing contribution to the density of \(\overline{\Sigma}\).

\begin{prop} \label{prop:3mod8} \( \delta(\overline{\Sigma}_3) = 0 \).
\end{prop}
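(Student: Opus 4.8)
The plan is to exploit the explicit formula $\overline{\Sigma}_3(q) = D(q)^3$ from Lemma~\ref{subsets}(iii). First I would rewrite this cube using the Children's Binomial Theorem: since $D(q)^2 = \sum_{n \ge 0} q^{2(2n+1)^2} = D(q^2)$, we get
\[
\overline{\Sigma}_3(q) \;=\; D(q)^3 \;=\; D(q)\,D(q^2) \;=\; \Bigl(\sum_{a\ge 0} q^{(2a+1)^2}\Bigr)\Bigl(\sum_{b \ge 0} q^{2(2b+1)^2}\Bigr).
\]
Reducing mod $2$, every exponent occurring on the right is of the shape $(2a+1)^2 + 2(2b+1)^2$ with $a,b \ge 0$, so $\overline{\Sigma}_3 \subseteq E$, where $E := \{\, x^2 + 2y^2 : x,y \in \ZZ \,\}$. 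It therefore suffices to prove $\delta(E) = 0$.

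For that second step I would appeal to the classical theorem of Landau and Bernays: the number of integers up to $N$ represented by a fixed positive-definite binary quadratic form is $O\!\left(N/\sqrt{\log N}\right) = o(N)$. If one prefers a self-contained argument, a short sieve suffices. For a prime $p \equiv 5 \pmod 8$ one has $\left(\tfrac{-1}{p}\right) = 1$ and $\left(\tfrac{2}{p}\right) = -1$, so $-2$ is a quadratic non-residue mod $p$; hence $p \mid x^2 + 2y^2$ forces $p \mid x$ and $p \mid y$, so in fact $p^2 \mid x^2 + 2y^2$. Thus no integer $n$ with $p \,\|\, n$ lies in $E$. Taking any finitely many primes $p_1 < \cdots < p_k$, all congruent to $5 \pmod 8$, and using that the conditions $p_i \,\|\, n$ depend on $n$ modulo the pairwise coprime moduli $p_i^2$ and so occur independently, we get that the upper density of $E$ is at most $\prod_{i=1}^{k} \bigl(1 - \tfrac{p_i - 1}{p_i^2}\bigr)$. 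Since $\sum_{p \equiv 5 \,(8)} 1/p$ diverges by Dirichlet, this product tends to $0$ as $k \to \infty$, so $\delta(E) = 0$, and hence $\delta(\overline{\Sigma}_3) = 0$.

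I do not expect a real obstacle here — as the authors note, the statement is ``straightforward''. The only point requiring a decision is how to cite or prove the density-zero of $E$: invoking Landau--Bernays is quickest, while the elementary sieve above keeps the paper self-contained (and one could even record the sharper $|E \cap [0,N]| = \Theta(N/\sqrt{\log N})$ as a remark, though nothing beyond $o(N)$ is needed).
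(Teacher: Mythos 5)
Your proposal is correct, and its first step is exactly the paper's: from Lemma~\ref{subsets}(iii) one sees that every element of \( \overline{\Sigma}_3 \) is represented by the form \( x^2 + 2y^2 \) (the paper phrases this as ``\( n \in \overline{\Sigma}_3 \) iff the number of representations \( n = a^2+2b^2 \) is odd''), so everything reduces to showing that the set of integers represented by this binary quadratic form has density zero. Where you diverge is in how that final fact is justified: the paper simply cites the well-known lacunarity of the theta series of a binary quadratic form (referring to \cite{F}), whereas you offer either the Landau--Bernays asymptotic \( O(N/\sqrt{\log N}) \) or a self-contained sieve. Your sieve is sound: for \( p \equiv 5 \pmod 8 \) one indeed has \( \left(\frac{-2}{p}\right) = -1 \), so \( p \mid x^2+2y^2 \) forces \( p^2 \mid x^2 + 2y^2 \), hence no \( n \) exactly divisible by such a \( p \) is represented; the conditions for distinct primes live modulo the coprime moduli \( p_i^2 \), giving the upper-density bound \( \prod_i \bigl(1 - \frac{p_i-1}{p_i^2}\bigr) \), which tends to \( 0 \) because \( \sum_{p \equiv 5 \,(8)} 1/p \) diverges. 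The paper's route is shorter and leans on a standard (and sharper) analytic fact; yours buys self-containment at the cost of invoking Dirichlet's theorem for the divergence of the reciprocal sum, and it only yields density zero rather than the \( \Theta(N/\sqrt{\log N}) \) count, which is all that is needed here.
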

The interested reader can also arrive at this conclusion by demonstrating that \( \overline{\Sigma}_3 = \left\{p^{e}k^{2} \mid p\text{ prime}, p \equiv 3 \pmod{8}, e \equiv 1 \pmod{4}, k\text{ odd}, p \nmid k \right\} \).
\begin{proof} By Lemma \ref{subsets}, \( \overline{\Sigma}_3(q) = D(q)D(q)^2 \).  Using the Children's Binomial Theorem, we see that \( n \in \overline{\Sigma}_3 \) if and only if the number of representations of \( n \) as \( a^2 + 2b^2 \), with \( a \) and  \( b \) positive is odd. The proposition then follows from the well-known fact that the theta series of a binary quadratic form is lacunary.  (See, for example, \cite{F}.)
\end{proof}

\begin{theorem} \label{thm:main} \( \delta(\overline{\Sigma}) = \delta(\overline{\Sigma}_7) \)
\end{theorem}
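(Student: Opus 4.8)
The plan is to exploit the disjoint decomposition of \( \overline{\Sigma} \) into residue classes modulo \( 8 \) established in Lemma \ref{subsets}, and to observe that every piece except \( \overline{\Sigma}_7 \) is negligible. Since \( 0 \), \( 1 \), \( 3 \), and \( 7 \) are distinct residues modulo \( 8 \), the sets \( \overline{\Sigma}_0 \), \( \overline{\Sigma}_1 \), \( \overline{\Sigma}_3 \), \( \overline{\Sigma}_7 \) are pairwise disjoint, and Lemma \ref{subsets} gives \( \overline{\Sigma} = \overline{\Sigma}_0 \cup \overline{\Sigma}_1 \cup \overline{\Sigma}_3 \cup \overline{\Sigma}_7 \). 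By finite additivity of the counting function over disjoint sets, for every \( n \),
\[
\delta(\overline{\Sigma}, n) = \delta(\overline{\Sigma}_0, n) + \delta(\overline{\Sigma}_1, n) + \delta(\overline{\Sigma}_3, n) + \delta(\overline{\Sigma}_7, n).
\]

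Next I would dispatch the three small terms. By Lemma \ref{subsets}(i), \( \overline{\Sigma}_0 = \{0\} \), so \( \delta(\overline{\Sigma}_0, n) = 1/(n+1) \to 0 \). By Lemma \ref{subsets}(ii), \( \overline{\Sigma}_1 \) is the set of odd squares, so \( |\overline{\Sigma}_1 \cap [0,n]| \leq \sqrt{n} + 1 \) and hence \( \delta(\overline{\Sigma}_1, n) \to 0 \). Finally, \( \delta(\overline{\Sigma}_3) = 0 \) is exactly Proposition \ref{prop:3mod8}. Substituting these three facts into the displayed identity yields \( \delta(\overline{\Sigma}, n) = \delta(\overline{\Sigma}_7, n) + o(1) \) as \( n \to \infty \).

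The conclusion then follows by letting \( n \to \infty \): the limit defining \( \delta(\overline{\Sigma}) \) exists if and only if \( \lim_n \delta(\overline{\Sigma}_7, n) \) exists, and whenever either exists the two are equal, so \( \delta(\overline{\Sigma}) = \delta(\overline{\Sigma}_7) \). There is no genuine obstacle in this argument; the only point requiring a little care is that the statement should be read as an identity of densities witnessed by (equivalently, presupposing) the existence of \( \delta(\overline{\Sigma}_7) \). This is precisely why the remainder of the analysis is devoted to \( \overline{\Sigma}_7 \) through the formula \( \overline{\Sigma}_7(q) = q^7 T(q)^{16} \) of Theorem \ref{final equation}: Theorem \ref{thm:main} reduces the entire density computation for \( \overline{\Sigma} \) to understanding the single congruence class \( 7 \bmod 8 \).
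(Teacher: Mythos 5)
Your proposal is correct and follows essentially the same route as the paper: decompose \( \overline{\Sigma} \) into the disjoint classes \( \overline{\Sigma}_0, \overline{\Sigma}_1, \overline{\Sigma}_3, \overline{\Sigma}_7 \), note that the first two are sparse and that \( \delta(\overline{\Sigma}_3)=0 \) by Proposition \ref{prop:3mod8}, and conclude. Your extra care with relative densities and the existence of the limit is a slight refinement of, not a departure from, the paper's argument.
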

\begin{proof}[Proof of Theorem \ref{thm:main}] To show that \( \delta(\overline{\Sigma}) = \delta(\overline{\Sigma}_7) \), we recall that
\[
\overline{\Sigma} = \overline{\Sigma}_0 \cup \overline{\Sigma}_1 \cup \overline{\Sigma}_3 \cup \overline{\Sigma}_7.
\]
Since each \(\overline{\Sigma}_k \) is disjoint, \( \delta(\overline{\Sigma}) = \delta(\overline{\Sigma}_0) + \delta(\overline{\Sigma}_1) + \delta(\overline{\Sigma}_3) + \delta(\overline{\Sigma}_7) \).  Because \( \overline{\Sigma}_0 = \left\{0\right\} \) and \( \overline{\Sigma}_1 = \left\{\left(2k+1\right)^2 \mid k\in \NN, k \geq 0\right\} \), \( \delta(\overline{\Sigma}_0) + \delta(\overline{\Sigma}_1) = 0 \).  By Proposition \ref{prop:3mod8}, \( \delta(\overline{\Sigma}_3) = 0 \).
\end{proof}
\begin{cor} \( 0 \leq \delta(\overline{\Sigma}) \leq 1/16 \).
\end{cor}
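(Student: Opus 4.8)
The plan is to combine Theorem \ref{thm:main}, which reduces the question to estimating $\delta(\overline{\Sigma}_7)$, with the explicit formula $\overline{\Sigma}_7(q) = q^7 T(q)^{16}$ from Theorem \ref{final equation}. The lower bound is immediate: every relative density $\delta(\overline{\Sigma},n) = |\overline{\Sigma} \cap [0,n]|/(n+1)$ is a ratio of nonnegative integers, so $\delta(\overline{\Sigma},n) \geq 0$ for all $n$, and hence $\delta(\overline{\Sigma}) \geq 0$ (and similarly for its $\liminf$, should one prefer not to assume beforehand that the limit exists).

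For the upper bound, the first step is to note that raising an element of $\FF_2[[q]]$ to the sixteenth power is a purely ``inflationary'' operation. Writing $16 = 2^4$ and applying the Children's Binomial Theorem four times, we get $T(q)^{16} = T(q^{16})$; equivalently, the only monomials with nonzero coefficient in $T(q)^{16}$ have exponents divisible by $16$. Multiplying by $q^7$ and invoking Theorem \ref{final equation}, every exponent occurring in $\overline{\Sigma}_7(q)$ is therefore congruent to $7 \pmod{16}$, that is, $\overline{\Sigma}_7 \subseteq 16\ZZ + 7$.

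The rest is a counting estimate. Since $\overline{\Sigma}_7 \subseteq 16\ZZ + 7$, we have $|\overline{\Sigma}_7 \cap [0,n]| \leq \lfloor n/16 \rfloor + 1$, so
\[
\delta(\overline{\Sigma}_7, n) \leq \frac{\lfloor n/16 \rfloor + 1}{n+1},
\]
and the right-hand side tends to $1/16$ as $n \to \infty$. Hence $\limsup_{n \to \infty} \delta(\overline{\Sigma}_7, n) \leq 1/16$. By Theorem \ref{thm:main} the relative densities of $\overline{\Sigma}$ and $\overline{\Sigma}_7$ differ by $o(1)$ (because $\overline{\Sigma}_0$, $\overline{\Sigma}_1$, and $\overline{\Sigma}_3$ each have density $0$), so the same bound passes to $\overline{\Sigma}$, which together with the lower bound gives $0 \leq \delta(\overline{\Sigma}) \leq 1/16$.

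There is essentially no obstacle: all the substance has already been absorbed into Theorems \ref{final equation} and \ref{thm:main}. The only point that deserves care is the passage from the generating-function identity $\overline{\Sigma}_7(q) = q^7 T(q)^{16}$ to the arithmetic-progression containment $\overline{\Sigma}_7 \subseteq 16\ZZ + 7$ — this is precisely the place where the iterated Children's Binomial Theorem enters — and, if one wishes to be punctilious, the observation that all the displayed inequalities are really statements about $\limsup$ and $\liminf$ of relative densities, so that the corollary does not tacitly presume $\delta(\overline{\Sigma})$ exists as a genuine limit.
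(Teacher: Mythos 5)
Your proof is correct and matches the paper's own argument: both deduce $\overline{\Sigma}_7 \subseteq 16\ZZ + 7$ from $\overline{\Sigma}_7(q) = q^7 T(q)^{16}$ via the Children's Binomial Theorem, bound $\delta(\overline{\Sigma}_7)$ by $1/16$ by counting the progression, and transfer to $\overline{\Sigma}$ using Theorem \ref{thm:main}. You simply spell out the counting and the $\limsup/\liminf$ bookkeeping more explicitly than the paper does.
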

\begin{proof} By Theorem \ref{final equation}, \( \overline{\Sigma}_7(q)=q^7T(q)^{16} \).  Applying the Children's Binomial Theorem yet again, if \( n \) is a monomial exponent of \( q \) on the right-hand side of the preceding equation, then \( n \equiv 7 \pmod{16} \). (Equivalently, for \( n \equiv 15 \pmod{16} \), \( n \notin \overline{\Sigma}_7 \).)  Thus, \( 0 \leq \delta\left(\overline{\Sigma}_7\right) \leq 1/16 \) and by Theorem \ref{thm:main}, \( 0 \leq \delta\left(\overline{\Sigma}\right) \leq 1/16 \).
\end{proof}

Numerical evidence strongly suggests the following conjecture.

\begin{conj} \( \delta(\overline{\Sigma}) = 1/32 \).
\end{conj}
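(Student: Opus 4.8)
The plan is to reduce the conjecture to the single statement that the power series $T(q)$ of Theorem~\ref{final equation} has nonzero coefficients of natural density exactly $1/2$, and then to attack that density statement through the theory of modular forms modulo $2$. By Theorem~\ref{final equation}, $\overline{\Sigma}_7(q) = q^7 T(q)^{16}$. Since $16 = 2^4$, the Children's Binomial Theorem gives $T(q)^{16} = \sum_{t} q^{16t}$, the sum over exponents $t$ of nonvanishing monomials of $T(q)$, so that $\overline{\Sigma}_7 = \{16t+7 : [q^t]T(q) \neq 0\}$. The map $t \mapsto 16t+7$ is a bijection onto $\overline{\Sigma}_7$, and counting in $[0,N]$ gives $\delta(\overline{\Sigma}_7) = \tfrac{1}{16}\delta(T)$ \emph{provided the limit $\delta(T)$ exists}. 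Combined with Theorem~\ref{thm:main}, the conjecture is therefore equivalent to the pair of assertions that $\delta(T)$ exists and equals $1/2$; note that the mere existence of the limit is itself not established by the preceding results and must be proved as well.

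Next I would record that $T$ is the reduction modulo $2$ of an explicit eta-quotient, which is what brings the modular machinery into play. Since $T(q) = \Delta(q)^4/\sqrt{\Delta_e(q)}$ and $\Delta(q)^2 = \Delta(q^2)$, we have $\Delta(q)^4 = \Delta(q^4) = \sum_{n\geq 0} q^{2n^2+2n}$. A short computation with the parity of the triangular numbers shows $\Delta_e(q) = \sum_{m\in\ZZ} q^{2m(4m+1)}$, whence $\sqrt{\Delta_e(q)} = \sum_{m\in\ZZ} q^{m(4m+1)}$, so that
\[
T(q) = \frac{\sum_{n\geq 0} q^{2n^2+2n}}{\sum_{m\in\ZZ} q^{m(4m+1)}},
\]
a quotient of two theta series over $\FF_2$. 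I would also record the self-referential functional equation obtained by multiplying part~(\ref{eq5}) of Lemma~\ref{lem6} by $\Delta(q)^7$, and using $T(q)^2 = \Delta(q)^7\,\overline{\Sigma}(q)$ from~(\ref{eqk}) together with $D(q) = q\Delta(q)^8$, namely
\[
T(q)^2 = \Delta(q)^7\bigl(1 + D(q) + D(q)^3\bigr) + q^7 \Delta(q)^7\, T(q)^{16}.
\]
This equation exhibits the dyadic self-similarity that any exact density computation must exploit: $T$ is built from a sparse, density-zero part together with a rescaled $16$th power of itself.

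To prove $\delta(T) = 1/2$ I see two routes. The first is to test whether $T \bmod 2$ is algebraic over $\FF_2(q)$; if so, Christol's theorem makes its support a $2$-automatic set, the density exists, is rational, and is effectively computable from the minimal Mahler equation (the functional equation above is the natural candidate), and one would simply verify the computed value is $1/2$. I expect this route to fail, since $\Delta \bmod 2$, supported on the triangular numbers, is almost certainly transcendental; nevertheless it must be checked, since success would settle everything at once. The second, and I believe correct, route treats $T$ as an eigenform in $\FF_2[[q]]$: place it within the Hecke-module structure of modular forms mod $2$ in the style of Nicolas--Serre, attach the associated semisimple mod-$2$ Galois representation $\bar\rho$ with image $G = \mathrm{im}(\bar\rho) \subseteq \mathrm{GL}_2(\overline{\FF_2})$, show $T$ is \emph{not} lacunary (not of CM type) so that $\delta(T) > 0$ by Serre's theorem, and then compute the exact density of nonzero coefficients by Serre's method as a multiplicative aggregate of Chebotarev densities over $G$. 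The conjecture asserts that this aggregate equals $1/2$; this same analysis would simultaneously furnish the existence of the limit.

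The entire difficulty lies in that final evaluation: the general theory delivers \emph{positive} density but does not pin the constant. Forcing it to be exactly $1/2$ is of the same depth as the Parkin--Shanks conjecture for $p(n)$, which is precisely why that problem remains open. Concretely, the second route stalls at showing that the image group $G$ together with the induced multiplicative density yields exactly $1/2$ rather than merely some positive number, and no technique is known that compels such a value for a non-CM form mod $2$. A purely combinatorial alternative would be a density-preserving involution on the support of $T$ — or the Beatty-type description of that support suggested by the connections in the final section — exhibiting it as half of $\NN$; but constructing such an involution is exactly the ingredient missing from every known approach to Parkin--Shanks, so this is where essentially all of the difficulty resides.
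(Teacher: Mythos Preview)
The statement you are addressing is a \emph{conjecture} in the paper, not a theorem; the paper offers no proof whatsoever, only the remark that ``numerical evidence strongly suggests'' it. There is therefore no proof in the paper against which to compare your attempt.

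Your reduction of the conjecture to the assertion $\delta(T)=1/2$ is correct and is precisely the reformulation implicit in the paper's Theorem~\ref{final equation} together with Theorem~\ref{thm:main}. The auxiliary identities you record---the expression of $T$ as a quotient of two explicit theta series over $\FF_2$, and the functional equation $T(q)^2 = \Delta(q)^7\bigl(1+D(q)+D(q)^3\bigr)+q^7\Delta(q)^7 T(q)^{16}$---are also correct and are natural consequences of the paper's lemmas. However, as you yourself acknowledge, neither of your two proposed routes completes: the Christol route fails because the theta series involved are transcendental over $\FF_2(q)$ (their supports are not $2$-automatic), and the modular-forms route, even granting that $T$ sits inside the Nicolas--Serre framework as an eigenform, would yield at best positive density, with no known mechanism to force the exact value $1/2$. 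Your proposal is thus an accurate diagnosis of \emph{why} the statement remains open, and an honest reduction of it to a problem of Parkin--Shanks type, but it is not a proof---nor does the paper claim to have one.
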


\section{Appendix}

We conclude the paper with a few observations about the indices of elements of \( \Sigma \) corresponding to certain well-studied integer sequences.

\begin{definition} \label{c} Let \( c(n) = \lfloor \sqrt{n} \rfloor + \lfloor \sqrt{n/2} \rfloor \), the number of positive integers of the form \(k^2\) or \(2k^2\) less than or equal to \(n\).
\end{definition}

\begin{definition} Let \( \{\varsigma_n \}_{n =1}^\infty \) be the monotone increasing sequence comprised of all positive elements of \( \Sigma \).
\end{definition}

\begin{prop} \label{indices} For all \( n \geq 1 \), \( c(\varsigma_{n}) = n \).
\end{prop}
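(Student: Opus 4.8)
The plan is to reduce the statement to two facts already in hand: the explicit description of $\Sigma$ coming from Theorem~\ref{thm1} (equivalently Lemma~\ref{lem3}), and the observation that $c$ is literally the counting function of the positive part of $\Sigma$. First I would recall that, by Theorem~\ref{thm1}, the positive elements of $\Sigma$ are precisely the positive integers of the form $k^2$ or $2k^2$ with $k\geq 1$. Next I would verify that these two families are disjoint: if $a^2=2b^2$ for positive integers $a,b$, then $\sqrt 2$ would be rational, a contradiction. This disjointness is the one point that deserves (minor) care, since it is what guarantees that $c$ counts $\Sigma\setminus\{0\}$ with neither over- nor under-counting.

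Granting disjointness, for every integer $m\geq 1$ the squares in $[1,m]$ number $\lfloor\sqrt m\rfloor$ and the twice-squares in $[1,m]$ number $\lfloor\sqrt{m/2}\rfloor$, so
\[
\bigl|\Sigma\cap[1,m]\bigr| \;=\; \lfloor\sqrt m\rfloor + \lfloor\sqrt{m/2}\rfloor \;=\; c(m).
\]
(This is consistent with the formula for $\delta(\Sigma,n)$ displayed after Lemma~\ref{lem3}, whose extra ``$+1$'' reflects only that $0\in\Sigma$.) Thus $c$ agrees with the counting function of $\Sigma\setminus\{0\}$ on all of $\NN$.

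Finally, since $\{\varsigma_n\}_{n\geq 1}$ enumerates $\Sigma\setminus\{0\}$ in strictly increasing order, $\varsigma_n$ is by definition the $n$-th positive element of $\Sigma$, so exactly $n$ positive elements of $\Sigma$ lie in $[1,\varsigma_n]$; that is, $|\Sigma\cap[1,\varsigma_n]| = n$. Substituting $m=\varsigma_n$ into the displayed identity yields $c(\varsigma_n)=n$, which is the claim. I expect no real obstacle: beyond the disjointness remark, the argument is just bookkeeping around the definition of the increasing enumeration $\varsigma_n$ together with the characterization of $\Sigma$ proved earlier in the section.
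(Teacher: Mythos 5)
Your argument is correct and is essentially the paper's own proof: the paper likewise observes that $\varsigma_n$ is the $n$-th positive integer of the form $k^2$ or $2k^2$ and that $c$ counts exactly those integers, concluding $c(\varsigma_n)=n$. Your added check that the squares and twice-squares are disjoint is a harmless elaboration of a point the paper absorbs into Definition~\ref{c}.
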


\begin{proof} For any \( n \geq 1 \), \(n\) is the number of elements of \( \{\varsigma_{n}\}_{n = 1}^\infty \) less than or equal to \( \varsigma_{n} \).  Since \( \{\varsigma_{n}\}_{n = 1}^\infty \) is each positive integer of the form \( k^2 \) or \( 2k^2 \) in monotone increasing order, \( c( \varsigma_{n}) = n \).
\end{proof}

\begin{definition} Below, we define six (non-homogeneous) Beatty sequences for \( k \geq 1 \):
\end{definition}

\begin{enumerate}[i.]
\item Let \( w_k \) be the \(k\)-th winning positions in the 2-Wythoff game (OEIS A001954).
\item Let \( \alpha_{k} = \lfloor k(2+\sqrt{2}) \rfloor \) (OEIS A001952).
\item Let \( \beta_{k} = \lfloor k(2+\sqrt{2})/2 \rfloor \) (OEIS A003152).
\item Let \( \gamma_{k} = \lfloor (k-1/2)(2+2\sqrt{2}) \rfloor \) (OEIS A215247).
\item Let \( \delta_{k} = \lfloor k(2+2\sqrt{2}) \rfloor \) (OEIS A197878).
\item Let \( \epsilon_{k} = \lfloor k (1+\sqrt{2}) \rfloor \) (OEIS A003151).
\end{enumerate}

The following is a result in the 2-Wythoff winning positions \cite{C}.

\begin{theorem} \label{thm:w} \( w_k = \lfloor (k - 1/2)(2 + \sqrt{2}) \rfloor \).
\end{theorem}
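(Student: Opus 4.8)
The plan is to recognize the quantity \( \lfloor(k-1/2)(2+\sqrt 2)\rfloor \) as an odd-indexed Beatty subsequence and then reconcile that description with the combinatorics of the game. Put \( \mu=(2+\sqrt 2)/2 \), so that in the notation above \( \beta_k=\lfloor k\mu\rfloor \). Since \( 2k\mu=k(2+\sqrt 2) \) exactly and \( \mu \) is irrational, the even-indexed terms of \( \beta \) are \( \beta_{2k}=\lfloor k(2+\sqrt 2)\rfloor=\alpha_k \), while the odd-indexed terms are \( \beta_{2k-1}=\lfloor(k-1/2)(2+\sqrt 2)\rfloor \), the very quantity in the theorem; and since \( \mu\in(1,2) \) the sequence \( \beta \) is strictly increasing, so these two subsequences partition \( \{\beta_k\}_{k\ge 1} \). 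Because \( 1/\mu+1/(1+\sqrt 2)=(2-\sqrt 2)+(\sqrt 2-1)=1 \) with both generators irrational and larger than \( 1 \), Beatty's theorem gives \( \NN^{+}=\{\beta_k\}_{k\ge 1}\sqcup\{\epsilon_k\}_{k\ge 1} \); peeling off the even-indexed \( \beta \)'s then refines this to \( \NN^{+}=\{\alpha_k\}_{k\ge 1}\sqcup\{\epsilon_k\}_{k\ge 1}\sqcup\{\lfloor(j-1/2)(2+\sqrt 2)\rfloor\}_{j\ge 1} \). Thus the theorem is equivalent to the assertion that the 2-Wythoff winning positions are precisely the positive integers lying in neither \( \{\alpha_k\} \) (the larger coordinates of the \( P \)-positions) nor \( \{\epsilon_k\} \) (the Beatty sequence for \( 1+\sqrt 2 \)).

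To prove that equivalent statement I would begin from Fraenkel's description of the 2-Wythoff \( P \)-positions: they are the pairs \( (A_n,B_n) \) and their reflections, where \( A_0=B_0=0 \), \( A_n=\operatorname{mex}\{A_i,B_i:0\le i<n\} \), \( B_n=A_n+2n \), and explicitly \( A_n=\lfloor n\sqrt 2\rfloor \), \( B_n=\lfloor n(2+\sqrt 2)\rfloor=\alpha_n \), with \( \{A_n\}_{n\ge1}\sqcup\{B_n\}_{n\ge1}=\NN^{+} \) (another Beatty pair, since \( 1/\sqrt 2+1/(2+\sqrt 2)=1 \)). One checks along the way that \( \epsilon_n=A_n+n=B_n-n \), so \( \{\epsilon_n\} \) is intrinsic to the game. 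Using the characterization of the winning positions recorded in \cite{C} — a recursion or sieve derivable from the game's move structure — I would then verify by strong induction on \( k \) that \( w_k=\lfloor(k-1/2)(2+\sqrt 2)\rfloor \): granting the formula for all indices below \( k \), one shows that the closed-form value \( u_k:=\lfloor(k-1/2)(2+\sqrt 2)\rfloor \) both satisfies the ``winning'' condition (membership in \( \NN^{+}\setminus(\{\alpha_j\}\cup\{\epsilon_j\}) \), via the partition above) and is the least integer exceeding \( u_{k-1} \) that does so, so that uniqueness closes the induction.

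The inductive bookkeeping would rest on a short list of floor estimates: the increments \( u_k-u_{k-1} \), \( B_n-B_{n-1} \), and \( \epsilon_n-\epsilon_{n-1} \) each take one of two consecutive values according to where \( \{n\sqrt 2\} \) falls in \( [0,1) \), and the three sequences \( \alpha \), \( \epsilon \), \( u \) interleave with neither gap nor overlap. All of this follows from the distribution of \( (\{n\sqrt 2\})_{n\ge 0} \) in the unit interval — a three-distance-theorem statement — together with the two complementarities already isolated. I would record these as two lemmas, namely (L1) \( \{A_n\}\sqcup\{B_n\}=\NN^{+} \) with the defining mex recursion (Fraenkel), and (L2) \( \{\beta_k\}\sqcup\{\epsilon_k\}=\NN^{+} \) together with \( \beta_{2k}=\alpha_k \) and \( \beta_{2k-1}=u_k \) (elementary computation); after (L1) and (L2) the inductive step is routine.

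The main obstacle is the faithful translation of ``2-Wythoff winning position'' into the arithmetic condition ``in neither \( \{\alpha_k\} \) nor \( \{\epsilon_k\} \)''. This is the step that genuinely uses the game — the move set and the \( P/N \) analysis behind the mex recursion — rather than floor manipulation, and it is where the substance of \cite{C} lies. Once it is in hand, the closed form drops out of (L2), since \( w_k=\beta_{2k-1}=\lfloor(k-1/2)(2+\sqrt 2)\rfloor \). If \cite{C} already records the winning positions as that sieve, or directly as an odd-indexed Beatty subsequence, then nothing remains beyond the one-line identity \( \lfloor(2k-1)\mu\rfloor=\lfloor(k-1/2)(2+\sqrt 2)\rfloor \).
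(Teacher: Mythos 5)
The first thing to note is that the paper itself contains no proof of this statement: Theorem \ref{thm:w} is quoted from Connell \cite{C}, and the paper's only contribution here is the citation. Your write-up, read as a proof, has a hole in exactly the place where the substance would have to be. The Beatty bookkeeping you do carry out is correct: with \( \mu=(2+\sqrt 2)/2 \) one has \( 1/\mu+1/(1+\sqrt 2)=1 \), so \( \{\beta_k\}\sqcup\{\epsilon_k\} \) partitions the positive integers, and splitting \( \beta \) into even- and odd-indexed terms gives the three-way partition into \( \{\alpha_k\} \), \( \{\epsilon_k\} \), and \( \{\lfloor (j-1/2)(2+\sqrt 2)\rfloor\}_{j\ge 1} \). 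But this only shows that the closed form is \emph{equivalent} to the assertion that the \( w_k \) enumerate \( \mathbb{N}^{+}\setminus(\{\alpha_j\}\cup\{\epsilon_j\}) \); that assertion is the theorem, and you explicitly defer it (``this is where the substance of \cite{C} lies'', ``if \cite{C} already records the winning positions as that sieve\dots''). A proof conditioned on what the cited paper may or may not contain is not a proof; it is the paper's citation restated with extra floor identities.

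There is also a concrete mismatch in the game-theoretic input you propose to use. The homogeneous description \( A_n=\lfloor n\sqrt 2\rfloor \), \( B_n=\lfloor n(2+\sqrt 2)\rfloor=A_n+2n \) with \( \{A_n\}\sqcup\{B_n\}=\mathbb{N}^{+} \) is the P-position structure of Fraenkel's \( t \)-Wythoff generalization; the sequence the paper calls \( w_k \) (A001954, from Connell's generalization) consists of the larger members of the \emph{non-homogeneous} pairs \( \lfloor (k-1/2)(2+\sqrt 2)\rfloor \), which is precisely what is to be proved and is not among the \( B_n \) you quote. Worse, under the description you import, every positive integer occurs as a coordinate of some P-position, so the ``winning condition'' you plan to verify in the induction --- membership in \( \mathbb{N}^{+}\setminus(\{\alpha_j\}\cup\{\epsilon_j\}) \) --- does not follow from that description at all; it is reverse-engineered from the desired answer, which makes the proposed induction circular unless the translation of Connell's game into that sieve is first derived from the game's move structure. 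That derivation is the entire content of the theorem, and it never appears. If your intent is simply to take the characterization from \cite{C}, then the honest form of the argument is the one the paper uses: cite Connell, with at most the one-line identity \( \lfloor (2k-1)\mu\rfloor=\lfloor (k-1/2)(2+\sqrt 2)\rfloor \) as a remark.
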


\begin{prop} \label{sigalpbetgamdelprop} Let \( \varsigma_n \), \( \beta_n \), \( \alpha_n \), \( \delta_n \), and \( \gamma_n \) be defined as above.  Then we have the following.
\begin{enumerate}[i.]
\item \label{odd squares} If \( n = (2k-1)^2 \) for some positive integer \( k \), then \( n \) is the \( w_k \)-th term in \( \{\varsigma_{n}\}_{n = 1}^\infty \).
\item \label{even squares} If \( n = 4k^2 \) for some positive integer \( k \), then \( n \) is the \( \alpha_{k} \)-th term in \( \{\varsigma_{n}\}_{n = 1}^\infty \).
\item \label{squares} If \( n = k^2 \) for some positive integer \( k \), then \( n \) is the \( \beta_{k} \)-th term in \( \{\varsigma_{n}\}_{n = 1}^\infty \).
\item \label{twice odd squares} If \( n = 2(2k-1)^2 \) for some positive integer \( k \), then \( n \) is the \( \gamma_{k} \)-th term in \( \{\varsigma_{n}\}_{n = 1}^\infty \).
\item \label{twice even squares} If \( n = 8k^2 \) for some positive integer \( k \), then \( n \) is the \( \delta_{k} \)-th term in \( \{\varsigma_{n}\}_{n = 1}^\infty \).
\item \label{twice squares} If \( n = 2k^2 \) for some positive integer \( k \), then \( n \) is the \( \epsilon_{k} \)-th term in \( \{\varsigma_{n}\}_{n = 1}^\infty \).
\end{enumerate}
\end{prop}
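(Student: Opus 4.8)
The plan is to reduce every part to Proposition~\ref{indices}. Since $\{\varsigma_n\}_{n=1}^\infty$ lists the positive elements of $\Sigma$ (equivalently, the positive integers of the form $k^2$ or $2k^2$) in increasing order and $c(\varsigma_m)=m$ for all $m\ge 1$, an element $n\in\Sigma$ occupies position $c(n)$ in the sequence. So each of the six claims is exactly the identity $c(n)=(\text{claimed Beatty term})$ for $n$ of the stated shape, and the whole proof is a short floor-function computation once one records the elementary fact that $\lfloor m+x\rfloor=m+\lfloor x\rfloor$ for $m\in\ZZ$.

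First I would handle the two ``full'' cases by substituting directly into Definition~\ref{c}. For $n=k^2$, $c(k^2)=\lfloor\sqrt{k^2}\rfloor+\lfloor\sqrt{k^2/2}\rfloor=k+\lfloor k\sqrt2/2\rfloor=\lfloor k(2+\sqrt2)/2\rfloor=\beta_k$, which is part~(\ref{squares}); and for $n=2k^2$, $c(2k^2)=\lfloor k\sqrt2\rfloor+k=\lfloor k(1+\sqrt2)\rfloor=\epsilon_k$, which is part~(\ref{twice squares}). Parts~(\ref{even squares}) and~(\ref{twice even squares}) are the specializations $k\mapsto 2k$ of these identities: using $\sqrt{(2k)^2/2}=k\sqrt2$ and $\sqrt{2(2k)^2}=2k\sqrt2$ one gets $c(4k^2)=2k+\lfloor k\sqrt2\rfloor=\lfloor k(2+\sqrt2)\rfloor=\alpha_k$ and $c(8k^2)=2k+\lfloor 2k\sqrt2\rfloor=\lfloor k(2+2\sqrt2)\rfloor=\delta_k$.

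The two odd-square cases are identical in spirit but call on Theorem~\ref{thm:w} to rewrite $w_k$ in Beatty form. For $n=(2k-1)^2$ one computes $c((2k-1)^2)=(2k-1)+\lfloor(2k-1)\sqrt2/2\rfloor=\lfloor(2k-1)(2+\sqrt2)/2\rfloor=\lfloor(k-1/2)(2+\sqrt2)\rfloor=w_k$, which is part~(\ref{odd squares}); and for $n=2(2k-1)^2$, $c(2(2k-1)^2)=(2k-1)+\lfloor(2k-1)\sqrt2\rfloor=\lfloor(2k-1)(1+\sqrt2)\rfloor=\lfloor(k-1/2)(2+2\sqrt2)\rfloor=\gamma_k$, which is part~(\ref{twice odd squares}).

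There is no real obstacle beyond bookkeeping; the only point requiring any care is making sure each integer pulled out of a floor bracket (namely $k$, $2k$, or $2k-1$) is genuinely an integer, so that $\lfloor m+x\rfloor=m+\lfloor x\rfloor$ legitimately applies, and that the arguments $\sqrt{n/2}$ are simplified correctly. One may also note, as a sanity check, that parts~(\ref{odd squares}),~(\ref{even squares}),~(\ref{twice odd squares}), and~(\ref{twice even squares}) together account for every positive element of $\Sigma$ (squares split by the parity of their root, and likewise for twice-squares), with parts~(\ref{squares}) and~(\ref{twice squares}) giving the corresponding merged descriptions.
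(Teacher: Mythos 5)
Your proposal is correct and follows essentially the same route as the paper: reduce each case to Proposition \ref{indices}, compute \( c(n) \) directly from Definition \ref{c} via elementary floor manipulations, and invoke Theorem \ref{thm:w} to identify the odd-square case with the 2-Wythoff sequence. The only difference is cosmetic bookkeeping (you pull the integer part out of the floor explicitly and treat the even cases as specializations), so no further comment is needed.
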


\begin{proof} Let \(n = (2k-1)^2 \) be the \( k \)-th odd square.  By Definition \ref{c},
\begin{align*}
c(n) &= \lfloor \sqrt{(2k-1)^2} \rfloor + \lfloor \sqrt{(2k-1)^2/2} \rfloor \\
&= \lfloor 2k-1 + (2k-1)/\sqrt{2} \rfloor \\
&= \lfloor (2k-1)(1+1/\sqrt{2}) \rfloor \\
&= \lfloor (k-1/2)(2+\sqrt{2}) \rfloor.
\end{align*}
Theorem \ref{thm:w} and Proposition \ref{indices} complete the proof of (\ref{odd squares}).  Now, let \(n = 4k^2 \) be the \( k \)-th positive even square.  By Definition \ref{c},
\begin{align*}
c(n) &= \lfloor \sqrt{4k^2} \rfloor + \lfloor \sqrt{4k^2/2} \rfloor \\
&= \lfloor 2k + 2k/\sqrt{2}) \rfloor \\
&= \lfloor k(2+\sqrt{2}) \rfloor.
\end{align*}
Proposition \ref{indices} completes the proof of (\ref{even squares}).  Let \(n = k^2 \) be the \( k \)-th positive square.  By Definition \ref{c},
\begin{align*}
c(n) &= \lfloor \sqrt{k^2} \rfloor + \lfloor \sqrt{k^2/2} \rfloor \\
&= \lfloor k + k/\sqrt{2}) \rfloor \\
&= \lfloor k(2+\sqrt{2})/2 \rfloor.
\end{align*}
Again, Proposition \ref{indices} completes the proof of (\ref{squares}).  Let \(n = 2(2k-1)^2 \) be the \( k \)-th positive twice odd square.  By Definition \ref{c},
\begin{align*}
c(n) &= \lfloor \sqrt{2(2k-1)^2} \rfloor + \lfloor \sqrt{2(2k-1)^2/2} \rfloor \\
&= \lfloor \sqrt{2}(2k-1) + 2k-1 \rfloor \\
&= \lfloor (2k-1)(1+\sqrt{2}) \rfloor \\
&= \lfloor (k-1/2)(2+2\sqrt{2}) \rfloor.
\end{align*}
Proposition \ref{indices} completes the proof of (\ref{twice odd squares}).  Let \(n = 8k^2 \) be the \( k \)-th positive twice even square.  By Definition \ref{c},
\begin{align*}
c(n) &= \lfloor \sqrt{8k^2} \rfloor + \lfloor \sqrt{8k^2/2} \rfloor \\
&= \lfloor 2\sqrt{2}k + 2k \rfloor \\
&= \lfloor k(2+2\sqrt{2}) \rfloor.
\end{align*}
Proposition \ref{indices} completes the proof of (\ref{twice even squares}).  Finally, let \(n = 2k^2 \) be the \( k \)-th positive twice-square.  By Definition \ref{c},
\begin{align*}
c(n) &= \lfloor \sqrt{2k^2} \rfloor + \lfloor \sqrt{2k^2/2} \rfloor \\
&= \lfloor k\sqrt{2} + k \rfloor \\
&= \lfloor k(1+\sqrt{2}) \rfloor.
\end{align*}
Proposition \ref{indices} completes the proof of (\ref{twice squares}).
\end{proof}

Proposition \ref{sigalpbetgamdelprop}(\ref{odd squares}) may be interpreted in the following somewhat surprising way.  Let \( \WW \) denote the positive natural numbers, i.e., the ``whole numbers''.  Define a function \( \mathcal{L} : \WW^\WW \rightarrow \WW^\WW \) as follows: given a function \( f \in \WW^\WW \) which takes on infinitely many odd values, let \( \mathcal{L}(f) \) be the function \( g \in \WW^\WW \) so that \( g(k) \) is the \( k \)-th smallest integer \( n \) so that \( f(n) \) is odd, i.e., for \( k \geq 1 \),
\[
g(k) = \min \{ n : f(n)\equiv 1 \!\!\!\! \pmod 2 \text{ and } n > g(k-1) \},
\]
where we take \( g(0) = -\infty \) by convention.  Then \( \mathcal{L}(\mathcal{L}(\sigma)) = c \).

\section{Acknowledgements}
We would like to thank Mr.~Nicholas.~J.~Smith, Dr. Randy.~M.~La Cross, and Dr.~Gary L.~Salazar for their support, as well as the anonymous referee for several helpful comments.

\bigskip
\hrule
\bigskip

\noindent 2010 {\it Mathematics Subject Classification}:
Primary 11B05; Secondary 11P83, 11A25.

\noindent \emph{Keywords: }
Parkin-Shanks conjecture, Sum of divisors function, Reciprocal sets.

\bigskip
\hrule
\bigskip

\noindent (Concerned with OEIS sequences
A000203,
A001952,
A001954,
A003151,
A003152,
A028982,
A052002,
A192628,
A192717,
A192718,
A197878,
A210449,
A210450, and
A215247.)

\bigskip
\hrule
\bigskip

\vspace*{+.1in}
\noindent
Received XXXXXXX;
revised versions received XXXXXXX; XXXXXXX.
Published in {\it Journal of Integer Sequences}, XXXXXXXX.

\bigskip
\hrule
\bigskip

\noindent
Return to
\htmladdnormallink{Journal of Integer Sequences home page}{http://www.cs.uwaterloo.ca/journals/JIS/}.
\vskip .1in

\end{document}